\numberwithin{equation}{section}
\numberwithin{figure}{section}
\theoremstyle{plain}
\newtheorem{theorem}{\protect\theoremname}
  \theoremstyle{plain}
  \newtheorem{lemma}[theorem]{\protect\lemmaname}
  \theoremstyle{plain}
  \theoremstyle{plain}
  \newtheorem{corollary}[theorem]{\protect\corollaryname}
  \theoremstyle{plain}
  \newtheorem{conjecture}[theorem]{\protect\conjecturename}
  \theoremstyle{definition}
  \newtheorem{definition}[theorem]{\protect\definitionname}
 \newlist{casenv}{enumerate}{4}
 \setlist[casenv]{leftmargin=*,align=left,widest={iiii}}
 \setlist[casenv,1]{label={{\itshape\ \casename} \arabic*.},ref=\arabic*}
 \setlist[casenv,2]{label={{\itshape\ \casename} \roman*.},ref=\roman*}
 \setlist[casenv,3]{label={{\itshape\ \casename\ \alph*.}},ref=\alph*}
 \setlist[casenv,4]{label={{\itshape\ \casename} \arabic*.},ref=\arabic*}
  \providecommand{\corollaryname}{Corollary}
  \providecommand{\definitionname}{Definition}
  \providecommand{\lemmaname}{Lemma}
  \providecommand{\propositionname}{Proposition}
  \providecommand{\conjecturename}{Conjecture}
  \providecommand{\casename}{Case}
\providecommand{\theoremname}{Theorem}
\title{Linear dependence between hereditary quasirandomness conditions}
\author{Xiaoyu He}
\begin{document}

\maketitle

\begin{abstract}
Answering a question of Simonovits and S\' os, Conlon, Fox, and Sudakov
proved that for any nonempty graph $H$, and any $\varepsilon>0$, there
exists $\delta>0$ polynomial in $\varepsilon$, such that if $G$
is an $n$-vertex graph with the property that every $U\subseteq V(G)$
contains $p^{e(H)}|U|^{v(H)}\pm\delta n^{v(H)}$ labeled copies of
$H$, then $G$ is $(p,\varepsilon)$-quasirandom in the sense that
every subset $U\subseteq G$ contains $\frac{1}{2}p|U|^{2}\pm\varepsilon n^{2}$
edges. They conjectured that $\delta$ may be taken to be linear in
$\varepsilon$ and proved this in the case that $H$ is a complete graph.
We study a labelled version of this quasirandomness property proposed
by Reiher and Schacht. Let $H$ be any nonempty graph on $r$ vertices
$v_{1},\ldots,v_{r}$, and $\varepsilon>0$. We show that there exists
$\delta=\delta(\varepsilon)>0$ linear in $\varepsilon$, such that
if $G$ is an $n$-vertex graph with the property that every sequence
of $r$ subsets $U_{1},\ldots,U_{r}\subseteq V(G)$, the number of
copies of $H$ with each $v_{i}$ in $U_{i}$ is $p^{e(H)}\prod|U_{i}|\pm\delta n^{v(H)}$,
then $G$ is $(p,\varepsilon)$-quasirandom. 
\end{abstract}

\section{Introduction}

Random-like objects, in particular quasirandom graphs, have become
a central object of study in combinatorics and theoretical computer
science (see for example the survey article of Krivelevich and Sudakov
~\cite{KrivelevichSudakov}). In this paper, we will prove a generalization
of a result of Conlon, Fox, and Sudakov~\cite{ConlonFoxSudakov} on
quasirandom graphs that ties in with a line of research motivated
by two important principles of extremal graph theory. First, that
many ``natural'' properties of random graphs are equivalent; second,
that many results provable by Szemer\' edi's regularity lemma can
be more effectively proved directly without it.

Although certain notions of quasirandom graphs were studied earlier,
such as in Thomason's work on jumbled graphs~\cite{Thomason1},
the idea that many of these notions are equivalent first appeared
in the seminal work of Chung, Graham and Wilson~\cite{ChungGrahamWilson}. 

The Erd\H os-R\'enyi random graph $G(n,p)$ is the random graph
on $n$ vertices where each of the $\binom{n}{2}$ edges is drawn
independently with probability $p$. A priori, any number of properties
of the prototypical random graph $G(n,p)$ could be used to define
quasirandomness, but Chung, Graham, and Wilson~\cite{ChungGrahamWilson}
discovered that many of these properties are qualitatively equivalent,
leading to a canonical notion of quasirandomness for graphs. 

We will only consider simple, undirected graphs. Write $V(G)$ for the set of vertices of a graph $G$, $E(G)$ for
the set of edges, and define $v(G)=|V(G)|$, $e(G)=|E(G)|$. Also,
write $x=y\pm\Delta$ if $|x-y|\le\Delta$. We say that a graph $G$
has edge density $p$ if $e(G) = p\binom{v(G)}{2}$.
\begin{theorem}
(Chung, Graham, and Wilson~\cite{ChungGrahamWilson}). Let $p\in[0,1]$.
The following are equivalent properties of a graph $G$ with edge
density $p$, up to the choice of $\varepsilon>0$:
\begin{enumerate}
\item For some $s\ge4$ and every graph $H$ on $s$ vertices, the number
of induced subgraphs of $G$ isomorphic to $H$ is
\[
p^{e(H)}(1-p)^{\binom{v(H)}{2}-e(H)}v(G)^{v(H)}\pm\varepsilon v(G)^{v(H)}.
\]
\item There exists a nontrivial graph $H$ such that every induced subgraph
$G[U]$ of $G$ contains $p^{e(H)}|U|^{v(H)}\pm\varepsilon v(G)^{v(H)}$
(not necessarily induced) subgraphs isomorphic to $H$.
\item The number of $4$-cycles in $G$ is at most $p^{4}v(G)^{4}+\varepsilon v(G)^{4}$.
\item The two largest (in absolute value) eigenvalues $\lambda_{1},\lambda_{2}$
of the adjacency matrix of $G$ satisfy $\lambda_{1}=(p\pm\varepsilon)v(G)$
and $|\lambda_{2}|\le\varepsilon v(G)$.
\item For every vertex subset $U$ of $G$, the number of edges in $U$
satisfies $e(U)=\frac{1}{2}p|U|^{2}\pm\varepsilon v(G)^{2}$.
\end{enumerate}
\end{theorem}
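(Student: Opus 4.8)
The plan is to prove the five conditions equivalent by closing the loop $(1)\Rightarrow(3)\Rightarrow(4)\Rightarrow(5)\Rightarrow(1)$, which already gives the mutual equivalence of $(1),(3),(4),(5)$, and then adjoining $(5)\Rightarrow(2)$ and $(2)\Rightarrow(5)$ to bring in the remaining condition. Each implication costs only a fixed power of the parameter (a fourth root at worst) plus an error term vanishing as $n\to\infty$, which is all that ``up to the choice of $\varepsilon$'' requires, so I will not track constants carefully and will assume $n$ is large. Write $A$ for the adjacency matrix of $G$ with eigenvalues $\mu_1\ge\cdots\ge\mu_n$ and eigenvectors $v_1,\dots,v_n$, write $\mathbf{1}_S$ for the indicator vector of $S\subseteq V(G)$, set $E=A-pJ$, and let $\|E\|_\square=\max_{S,T}|\mathbf{1}_S^{\top}E\mathbf{1}_T|$ be the cut norm; we take $p\in(0,1)$ fixed, the cases $p\in\{0,1\}$ being trivial.

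For $(1)\Rightarrow(3)$, first pass from $(1)$ for some $s\ge4$ to $(1)$ for $s=4$ by averaging each $4$-vertex induced count over its extensions to an $s$-set, then write the number of labelled copies of $C_4$ as the sum, over $4$-vertex graphs $H\supseteq C_4$, of the numbers of labelled induced copies of $H$; by $(1)$ this is $\bigl(\sum_{H\supseteq C_4}p^{e(H)}(1-p)^{6-e(H)}\bigr)n^4\pm O(\varepsilon)n^4$, and the inner sum collapses to $p^4$ by the binomial theorem, so $(3)$ follows with room to spare. For $(3)\Rightarrow(4)$, the number of labelled $C_4$'s equals $\operatorname{tr}(A^4)=\sum_i\mu_i^4$ up to $O(n^3)$; the hypothesis bounds this by $(p^4+\varepsilon)n^4$, and together with the Rayleigh bound $\mu_1\ge 2e(G)/n=p(n-1)$ this pins $\mu_1$ within $O(\varepsilon^{1/4})n$ of $pn$ and forces $\max_{i\ge2}|\mu_i|\le O(\varepsilon^{1/4})n$, which is $(4)$. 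For $(4)\Rightarrow(5)$ --- the expander mixing lemma --- expanding $\mathbf{1}$ in the eigenbasis and using $2e(G)=\sum_i\mu_i\langle\mathbf{1},v_i\rangle^2$ with $\mu_1=(p\pm\varepsilon)n$ forces $v_1$ to be within $O(\sqrt\varepsilon)$ of uniform, after which $2e(U)=\mathbf{1}_U^{\top}A\mathbf{1}_U=\mu_1\langle\mathbf{1}_U,v_1\rangle^2+O(\varepsilon)n^2=p|U|^2+O(\sqrt\varepsilon)n^2$.

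For $(5)\Rightarrow(2)$ and $(5)\Rightarrow(1)$: a routine polarization shows $(5)$ is equivalent to its bipartite form $e(S,T)=p|S||T|\pm O(\varepsilon)n^2$ for all $S,T$, i.e.\ $\|E\|_\square=O(\varepsilon)n^2$. The cut-norm counting lemma --- which one proves directly, without the regularity lemma, by replacing the edges of a fixed graph $F$ by the constant $p$ one at a time and bounding the effect of each replacement by a bilinear form in two functions $V(G)\to[0,1]$, hence by $\|E\|_\square$ --- then yields, for every graph $F$ and every $U\subseteq V(G)$,
\[
\sum_{\phi\colon V(F)\to U}\ \prod_{uv\in E(F)}A_{\phi(u)\phi(v)}\ =\ p^{e(F)}|U|^{v(F)}\ \pm\ O_F(\varepsilon)\,n^{v(F)}.
\]
For general $U$ this is precisely $(2)$; for $U=V(G)$, Möbius inversion over the supergraphs of $F$ turns it into the labelled induced-subgraph counts of $(1)$.

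The crux is $(2)\Rightarrow(5)$: deducing quasirandomness from the hereditary count of a single nontrivial graph $F$. I would argue by contradiction. If some $U_0$ violates $(5)$, say $e(U_0)\ge\tfrac12p|U_0|^2+\varepsilon' n^2$ (the deficient case being symmetric), fix an edge $x_1x_2$ of $F$ and consider, for an auxiliary set $W$, the number of copies of $F$ in $G$ sending $x_1x_2$ to an edge inside $U_0$ and the remaining $v(F)-2$ vertices into $W$. Expressing this quantity two ways --- as a sum over the edges of $G[U_0]$ of their numbers of extensions to copies of $F$ in $W$, and via $(2)$ applied to $U_0$, $W$, and $U_0\cup W$ --- one isolates a term proportional to $e(U_0)-\tfrac12p|U_0|^2$ and reaches a contradiction. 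The obstacle, and the place where the quantitative relationship between the parameters is set, is that the number of extensions of a given edge of $G[U_0]$ to a copy of $F$ in $W$ need not be constant; a Cauchy--Schwarz step is needed to ensure that the edge excess inside $U_0$ propagates to a genuine global discrepancy in the $F$-count rather than being averaged away. This implication is the one treated by Chung, Graham and Wilson, and whose quantitative form is the subject of the work of Conlon, Fox and Sudakov and of the present paper; once it is in hand the loop closes and all five conditions are equivalent up to the choice of $\varepsilon$.
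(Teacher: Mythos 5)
This theorem is background in the paper: it is cited from Chung--Graham--Wilson (with the single-graph hereditary property (2) really belonging to Simonovits--S\'os), and the paper supplies no proof of it, so the comparison here is with the standard literature rather than with an argument in the text. Your cycle $(1)\Rightarrow(3)\Rightarrow(4)\Rightarrow(5)$ and the implications $(5)\Rightarrow(2)$, $(5)\Rightarrow(1)$ are fine in outline: the $C_4$/trace/eigenvalue steps and the irregular expander-mixing step are standard and your sketches are completable, and your cut-norm counting lemma for $(5)\Rightarrow(2)$ is exactly the technique the paper itself uses in Lemma~\ref{lem:counting}.

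The genuine gap is $(2)\Rightarrow(5)$, and it is not a matter of omitted routine detail: it is the entire content of the hereditary quasirandomness problem that this paper and \cite{ConlonFoxSudakov,SimonovitsSos} are about, and your paragraph does not contain an argument for it. Two concrete failures: (i) property $(2)$ only counts copies of $F$ with \emph{all} vertices in a single set $U$, so the quantity you want --- copies sending the edge $x_1x_2$ into $U_0$ and the remaining $v(F)-2$ vertices into $W$ --- is not obtainable by applying $(2)$ to $U_0$, $W$ and $U_0\cup W$; inclusion--exclusion over these three sets produces a sum over \emph{all} assignments of the vertices of $F$ to the two sets, not the one assignment you need. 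Extracting counts with prescribed vertex-to-set assignments from the hereditary counts is precisely the passage from $\mathcal{P}^*_{H,p}$ to $\mathcal{R}_{H,p}$, which is the paper's open Conjecture and is nontrivial even with bad quantitative dependence. (ii) Even granting such mixed counts, the number of extensions of an edge of $G[U_0]$ to a copy of $F$ with the other vertices in $W$ varies with the edge, and an excess in $e(U_0)$ can in principle be cancelled by correlations between which edges are present and how many extensions they have; ``a Cauchy--Schwarz step'' does not by itself rule this out --- controlling exactly this is what forces Simonovits--S\'os to invoke the regularity lemma and what the Conlon--Fox--Sudakov degree-power argument (mirrored in Lemma~\ref{lem:main} here) is designed to do. As written, your proof establishes the equivalence of $(1),(3),(4),(5)$ and that these imply $(2)$, but not that $(2)$ implies the rest, so the five-way equivalence is not proved.
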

Explicitly, we mean that for any properties $i\ne j\in\{1,2,3,4,5\}$
above, and any $\varepsilon>0$, there exists $\varepsilon'>0$ such
that any graph $G$ with density $p$ which satisfies property $i$
with error parameter $\varepsilon'$ also satisfies property $j$
with error parameter $\varepsilon$.

Although these conditions are equivalent, the relationships between
the various $\varepsilon$'s are not so well understood. For example,
let $\mathcal{P}_{H,p}^{*}(\varepsilon)$ be property $2$ above for
some fixed $H,p$ and $\varepsilon>0$, which is called the ``hereditary
quasirandomness'' condition because it is inherited by all induced
subgraphs. Simonovits and S\'os~\cite{SimonovitsSos} were able
to prove using Szemer\' edi's regularity lemma~\cite{Szemeredi}
that for any two graphs $H,H'$, $\mathcal{P}_{H,p}^{*}(\delta)\implies\mathcal{P}_{H',p}^{*}(\varepsilon)$
where $\delta^{-1}$ is growing as a tower function of $\varepsilon^{-1}$.
The Simonovits-S\'os conjecture is that this dependence can be proved
without the regularity lemma.

Roughly speaking, Szemer\' edi's regularity lemma states that given
any $\varepsilon>0$, every graph $G$ can be decomposed into $K=K(\varepsilon)$
parts $V_{1},\ldots,V_{K}$ (the ``regularity partition'') such
that the edges between most pairs $V_{i},V_{j}$ are within $\varepsilon$
of being random. Although the regularity lemma is an extraodinarily
powerful tool, the quantative dependence of $K$ on $\varepsilon$
is of tower-type growth and this cannot be improved. Thus, tower-type
quantitative dependency is indicative of a straightforward application
of the regularity lemma. 

In practice, the regularity lemma is not always necessary, and we
can expect to improve quantitative bounds in various applications
by avoiding its use. When this is possible, the tower-type bounds
provided by the regularity lemma can usually be replaced by exponential
or even polynomial bounds. One of the most important examples of this
method is the weak regularity lemma of Frieze-Kannan~\cite{FriezeKannan},
which proves that a graph $G$ can be decomposed into a regularity
partition with only exponentially many parts in $\varepsilon$ if
we replace the regularity condition by a weaker global version. Problems
amenable to the regularity method sometimes only require the Frieze-Kannan
weak regularity condition. For example, it is possible to prove the
Simonovits-S\'os Conjecture with exponential dependence of $\delta$
on $\varepsilon$ using this method.

The Simonovits-S\'os Conjecture was settled by Conlon, Fox, and Sudakov
~\cite{ConlonFoxSudakov} by carefully extracting the useful ingredients
of the regularity proof without using its full power, achieving polynomial
dependency of $\delta^{-1}$ on $\varepsilon^{-1}$. They further
conjectured that the true dependence is linear. In this paper we will
study a variation of this conjecture introduced by Reiher and Schacht
~\cite{ReiherSchacht}, where instead of counting copies of an unlabelled
graph $H$ in any induced subgraph $G[U]$ of $G$, we count labelled
homomorphic copies of $H$ with the $i$-th vertex $v_{i}$ lying
in a prescribed subset $U_{i}\subset V(G)$. In this situation, we
prove the optimal linear dependence by extending a counting argument
from~\cite{ConlonFoxSudakov}.

\section{Background}

We first offer some notation for counting subgraphs. If $G$ is a graph and $U\subseteq V(G)$, write
$G[U]$ for the induced subgraph of $G$ on $U$. 

Let $H,G$ be two labelled graphs where $H$ has $r$ vertices $v_{1},\ldots,v_{r}$.
Let $U_{1},U_{2},\ldots,U_{r}$ be vertex subsets of $G$. In this setting, we define
$c(H,G;U_{1},\ldots,U_{r})$ to be the number of (labelled graph)
homomorphisms $\phi:H\rightarrow G$ with $\phi(v_{i})\in U_{i}$.
We abbreviate $c(H,G)$ for the total number of homomorphisms $H\rightarrow G$.
In particular $c(H,G[U])=c(H,G;U,\ldots,U)$ is the number of homomorphisms
from $H$ to the induced subgraph of $G$ on $U$. We think of $c(H,G;U_{1},\ldots,U_{r})$
as the number of (non-induced) labelled copies of $H$ in $G$ with
each vertex in a predetermined subset.
\begin{definition}
If $H$ is a fixed graph, $p\in[0,1]$ and $\varepsilon>0$, we say
$G$ satisfies the hereditary quasirandomness property $\mathcal{P}_{H,p}^{*}(\varepsilon)$
if for all $U\subseteq V(G)$,
\[
c(H,G[U])=p^{e(H)}|U|^{v(H)}\pm\varepsilon v(G)^{v(H)}.
\]
\end{definition}
In other words, the condition $\mathcal{P}_{H,p}^{*}(\varepsilon)$
is that every induced subgraph contains the right number of copies
of $H$. Note for any fixed $\varepsilon>0$ and $n$ sufficiently
large, the random graph $G(n,p)$ satisfies $\mathcal{P}_{H,p}^{*}(\varepsilon)$
almost surely. Also, note that $\mathcal{P}_{H,p}^{*}(\varepsilon)$ is trivially
satisfied by every graph if $H$ is empty, so we will only be concerned with nonempty $H$.

Simonovits and S\'os~\cite{SimonovitsSos} proved using the Szemer\'edi's
regularity lemma that the properties $\mathcal{P}_{H,p}^{*}(\varepsilon)$
are all equivalent, in the sense that for any nontrivial graphs $H,H'$
and $\varepsilon>0$, there exists $\delta>0$ such that $\mathcal{P}_{H,p}^{*}(\delta)\implies\mathcal{P}_{H',p}^{*}(\varepsilon)$.
Unfortunately, the dependence of $\delta$ on $\varepsilon$ in their
proof is of tower type because of the use of the regularity lemma.
Note that it suffices to show that $\mathcal{P}_{H,p}^{*}(\delta)\implies\mathcal{P}_{K_{2},p}^{*}(\varepsilon)$
where $K_{2}$ is the graph with a single edge; the other direction
is given by a straightforward counting lemma, which we state as Lemma~\ref{lem:counting} below.

Conlon, Fox, and Sudakov~\cite{ConlonFoxSudakov} were able to tailor
the regularity method to this problem to prove the same result with
polynomial dependence of the form $\delta=\Omega(\varepsilon^{f(p,v(H))})$
where the exponent $f$ depends only on $p$ and $v(H)$. They conjectured
that the dependence is in fact linear, and proved it for the case
$H=K_{n}$.
\begin{conjecture}
\label{conj:SSlinear}For any nonempty graph $H$, and real numbers
$p\in[0,1]$, $\delta>0$, we have
\[
\mathcal{P}_{H,p}^{*}(\delta)\implies\mathcal{P}_{K_{2},p}^{*}(\varepsilon)
\]
for some $\varepsilon=O_{H,p}(\delta)$.
\end{conjecture}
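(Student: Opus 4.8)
To prove Conjecture~\ref{conj:SSlinear} one should first restrict to $0<p<1$: for $p=0$ the hypothesis $\mathcal{P}_{H,0}^{*}(\delta)$ merely says that every induced subgraph contains $O(\delta n^{v(H)})$ copies of $H$, which for $H=K_{3}$ holds for $K_{n/2,n/2}$ even though $\mathcal{P}_{K_{2},0}^{*}(\varepsilon)$ fails for every $\varepsilon<\tfrac12$. So assume $0<p<1$, write $f(x,y)=\mathbf{1}[xy\in E(G)]-p$, and expand
\[
c(H,G[U])=\sum_{F\subseteq E(H)}p^{\,e(H)-|F|}\,|U|^{\,v(H)-v(F)}\,t_{F}(U),\qquad t_{F}(U):=\sum_{\psi\colon V(F)\to U}\ \prod_{ij\in F}f\big(\psi(i),\psi(j)\big),
\]
where $V(F)$ is the vertex set spanned by $F$. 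Thus $\mathcal{P}_{H,p}^{*}(\delta)$ is precisely the family of linear constraints $\sum_{\varnothing\ne F\subseteq E(H)}p^{\,e(H)-|F|}|U|^{\,v(H)-v(F)}t_{F}(U)=\pm\,\delta n^{v(H)}$ on the correlations $t_{F}(U)$, one for each $U$. Each single edge of $H$ contributes the term $t_{K_{2}}(U)=2e(U)-p|U|^{2}=:D(U)$ carrying the top power $|U|^{v(H)-2}$, and the objective is exactly $\sup_{U}|D(U)|=O_{H,p}(\delta)\,n^{2}$.

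The plan is to reduce $H$ to the single edge $K_{2}$ through a chain of graphs, losing only a multiplicative constant $C=C(H,p)$ per step, so that the final answer is $\varepsilon\le C^{\,O(e(H)+v(H))}\delta$. Two moves are available. Deleting an edge $e=ab$ gives $c(H,G[U])=p\cdot c(H-e,G[U])+\Delta_{e}(U)$, where by the expansion above $\Delta_{e}(U)$ is the part indexed by those $F\ni e$; there is no $|U|$ factor, so this step is constant-loss once one knows $\sup_{U}|\Delta_{e}(U)|=O_{H,p}(\delta)\,n^{v(H)}$. Deleting a leaf $w$ gives $c(H,G[U])=p|U|\,c(H-w,G[U])+\Delta_{w}(U)$, where $\mathcal{P}_{H-w,p}^{*}(\varepsilon')$ is automatic once $|U|\le\sqrt{\varepsilon'}\,n$ so only $|U|=\Omega(n)$ matters, but recovering $c(H-w,G[U])$ costs a division by $|U|$ which --- since $\mathcal{P}_{K_{2},p}^{*}(\varepsilon)$ is nontrivial only for $|U|\gtrsim\sqrt{\varepsilon}\,n$ --- degrades the loss to $\delta\mapsto\delta^{1-c}$ once $v(H)\ge3$. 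Peeling off edges (never isolating a vertex until the end) one could hope to reduce $H$ to a matching $M_{k}$, $k=\lfloor v(H)/2\rfloor$, and finish via the algebraic identity $c(M_{k},G[U])=(2e(U))^{k}=(p|U|^{2}+D(U))^{k}$ together with $|D(U)|\le p|U|^{2}$.

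The main obstacle, and the reason the conjecture is still open, is that neither of these last steps is free, for the same reason in both cases. The correction $\Delta_{e}(U)$ is a sum of correlations $t_{F}(U)$ with $v(F)\ge3$ --- cherry sums $\sum_{y\in U}\big(\sum_{x\in U}f(x,y)\big)^{2}$, triangle sums $\sum_{x,y,z\in U}f(x,y)f(y,z)f(z,x)$, and the like --- and any nontrivial bound on one of these is already a quasirandomness statement about $G$, so estimating it directly is circular; moreover $\mathcal{P}_{H,p}^{*}(\delta)$ only controls a fixed linear combination of all the $t_{F}(U)$, not $\Delta_{e}(U)$ by itself. In the same vein, a single use of $\mathcal{P}_{M_{k},p}^{*}(\delta)$ on a nearly independent set $U$ of intermediate size $\delta^{1/(2k)}p^{-1/2}n$ yields only $|D(U)|=O(\delta^{1/k})\,n^{2}$, and removing the excess power requires bringing in other subsets. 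Conlon, Fox, and Sudakov~\cite{ConlonFoxSudakov} resolve exactly this for $H=K_{r}$ by using the full symmetry of $K_{r}$, which lets them rewrite the higher-order terms as clique counts in induced subgraphs (and in neighbourhoods) and thereby close an induction on $r$; a substitute that works for an arbitrary $H$ is, I believe, the crux. The two routes I would try are: (i) first prove a \emph{one-sided} bound --- for $0<p<1$ the largest potentially harmful correlation terms tend to have a fixed sign (e.g. the cherry and $2$-matching terms are nonnegative), so $\mathcal{P}_{H,p}^{*}(\delta)$ gives for free a linear upper bound on $D(U)$ for all $|U|=\Omega(n)$ --- and then upgrade to a two-sided bound for all $U$ by re-running the hypothesis on complements $V(G)\setminus U$ and on unions $U\cup R$ with $R$ random of a carefully chosen density $q$, so that the genuinely uncontrolled correlations reappear only with a coefficient $q^{\Omega(1)}$ and can be absorbed into the main term; (ii) bootstrap the unlabelled hypothesis to the labelled Reiher--Schacht hypothesis~\cite{ReiherSchacht} at constant cost and then invoke the main theorem of this paper. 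I expect (i) to be the substantive route; (ii) is appealing but its opening step --- producing control of $c(H,G;U_{1},\ldots,U_{r})$ for arbitrary tuples from control of the diagonal $c(H,G[U])$ alone --- looks at least as hard as the cross-term difficulty it would avoid.
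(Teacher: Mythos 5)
You have not given a proof, and you are right not to: the statement you were asked about is Conjecture~\ref{conj:SSlinear}, which the paper itself leaves open (it states explicitly that ``the original conjecture of Conlon, Fox, Sudakov remains open'') and proves only the labelled variant, Theorem~\ref{thm:main}, under the stronger hypothesis $\mathcal{R}_{H,p}(\delta)$. So there is no proof in the paper to compare against, and your assessment of the state of affairs is accurate. Your diagnosis of the obstruction is also essentially the right one: $\mathcal{P}_{H,p}^{*}(\delta)$ controls only one linear combination of the correlation sums $t_{F}(U)$ per set $U$, the edge-term coefficient does not dominate the higher-order terms, and the symmetric-function trick of Conlon--Fox--Sudakov exploits the full symmetry of $K_{r}$ in a way that has no obvious analogue for general $H$. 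Your route (ii) is precisely the reduction the paper proposes: its Conjecture following Theorem~\ref{thm:main} asks exactly for $\mathcal{P}_{H,p}^{*}(\delta)\implies\mathcal{R}_{H,p}(\varepsilon)$ with $\varepsilon=O_{H,p}(\delta)$, after which Theorem~\ref{thm:main} finishes; and, as you say, that opening step (passing from diagonal counts $c(H,G[U])$ to off-diagonal counts $c(H,G;U_{1},\ldots,U_{r})$ at linear cost) is where all the difficulty sits. Your side remark about $p=0$ is a fair caveat about the literal statement (for $H=K_{3}$ and $p=0$ the hypothesis is vacuously satisfied by balanced complete bipartite graphs), which is consistent with the $p^{-3e(H)}$ degradation in Theorem~\ref{thm:main}; the conjecture should be read with $p$ bounded away from $0$ or with constants allowed to blow up as $p\to0$.

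One caution if you pursue route (i): your claim that the problematic correlation terms ``tend to have a fixed sign'' is only safe for even-power terms such as the cherry sum $\sum_{y\in U}\bigl(\sum_{x\in U}f(x,y)\bigr)^{2}$; odd terms such as the triangle sum $\sum_{x,y,z\in U}f(x,y)f(y,z)f(z,x)$ can take either sign, and for general $H$ the expansion of $c(H,G[U])$ mixes terms of both kinds with coefficients $p^{e(H)-|F|}|U|^{v(H)-v(F)}$ of comparable size when $|U|=\Theta(n)$. So the ``free one-sided bound'' step already needs an argument for why the sign-indefinite terms cannot conspire against the edge term, and that is again the same cross-correlation control you identified as the crux. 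In short: the proposal is an honest and well-informed discussion of an open problem, not a proof, and it correctly locates the missing ingredient relative to what the paper actually establishes.
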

Independently, Reiher and Schacht~\cite{ReiherSchacht} showed a similar
polynomial dependence for a stronger notion of quasirandomness which
takes configurations into account.
\begin{definition}
If $H$ is a fixed graph, we say $G$ satisfies $\mathcal{R}_{H,p}(\varepsilon)$
if for every sequence of $v(H)$ disjoint vertex subsets $U_{1},\ldots,U_{v(H)}\subseteq G$,
\[
c(H,G;U_{1},\ldots,U_{v(H)})=p^{e(H)}\prod_{i=1}^{v(H)}|U_{i}|\pm\varepsilon v(G)^{v(H)}.
\]
\end{definition}
In this paper we show the linear dependence in Conjecture~\ref{conj:SSlinear}
using the stronger condition $\mathcal{R}_{H,p}(\varepsilon)$. Note that as with $\mathcal{P}_{H,p}^{*}(\delta)$, the condition $\mathcal{R}_{H,p}(\varepsilon)$ is trivial when $H$ is empty. 
\begin{theorem}
\label{thm:main}For any nonempty graph $H$, and real numbers $p\in[0,1]$,
$\delta>0$, we have
\[
\mathcal{R}_{H,p}(\delta)\implies\mathcal{P}_{K_{2},p}^{*}(\varepsilon)
\]
for some $\varepsilon=O_{H}(p^{-3e(H)}\delta)$.
\end{theorem}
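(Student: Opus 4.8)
The plan is to deduce the $K_2$-quasirandomness of $G$ from $\mathcal{R}_{H,p}(\delta)$ by building up from the hypothesis on $H$-copies to a hypothesis on single edges, one vertex pair of $H$ at a time. The key structural observation is that $\mathcal{R}_{H,p}(\delta)$, being stated for arbitrary (and in particular not-necessarily-disjoint, by a standard inclusion-exclusion reduction) tuples of subsets $U_1,\dots,U_r$, is extremely flexible: we may set many of the $U_i$ equal to $V(G)$ and only ``activate'' a few of them. If $v_a v_b$ is an edge of $H$, the strategy is to set $U_i = V(G)$ for all $i \neq a,b$, and $U_a = A$, $U_b = B$ for the two sets $A, B$ whose edge count we wish to control. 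Then $c(H,G;U_1,\dots,U_r)$ becomes, up to normalization, a weighted count where the weight on a pair $(u,v)\in A\times B$ is (a power of $n$ times) the number of homomorphic extensions of the edge $uv$ to a copy of $H$ with the remaining vertices free. The hypothesis says this weighted sum over $A \times B$ is $p^{e(H)}|A||B| n^{r-2} \pm \delta n^r$.

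**Next I would** isolate the edge $v_a v_b$ by a telescoping / averaging argument. Write $H$ as $K_2$ together with the rest; the number of homomorphic extensions of a fixed edge $uv$ to $H$ is $e(uv) \cdot (\text{stuff depending on } u,v \text{ through the other edges of } H)$, where $e(uv) \in \{0,1\}$ indicates whether $uv \in E(G)$. The idea, exactly as in the complete-graph case of Conlon--Fox--Sudakov, is to ``peel off'' the non-$K_2$ part: one shows that the map $A, B \mapsto$ (number of edges between $A$ and $B$) can be recovered from the family of counts $c(H,G;\dots)$ by a bounded sequence of operations, each of which costs only a constant factor of $n^{v(H)-2}$ worth of error and a factor of $p$ in the density. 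Concretely, if $H'$ is $H$ with the edge $v_a v_b$ deleted, then $c(H,G;\dots) = \sum_{uv \in E(G), u\in A, v\in B} c(H'|_{u,v \text{ fixed}})$, and one compares this to the ``full'' count with $A=B=V(G)$ to extract the discrepancy. Iterating over a spanning structure of $H$ — removing one edge at a time, each time applying $\mathcal{R}$ with a suitably chosen pair of active sets — reduces the count of $H$ to the count of a single edge, accumulating a factor of $p^{-e(H)}$ (hence the $p^{-3e(H)}$ in the statement, which leaves room for slack) and only $O_H(1)$ additive losses of size $\delta n^{v(H)}$.

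**The main obstacle** I anticipate is handling the ``free'' vertices cleanly: when the other $r-2$ vertices of $H$ range over all of $V(G)$, the number of extensions of a fixed edge $uv$ is itself a count that need not be close to its expected value $p^{e(H)-1} n^{r-2}$ unless we already know $G$ is quasirandom — which is what we are trying to prove. The resolution is to avoid ever needing pointwise control on the extension counts: instead one only ever compares two global sums (one with $U_a = A, U_b = B$, one with $U_a = U_b = V(G)$, or intermediate sets), so that the unknown extension weights cancel in the comparison up to the allowed error. Making this cancellation precise — choosing the right sequence of test tuples $(U_1,\dots,U_r)$ and verifying that at each stage the error stays $O_H(\delta n^{v(H)})$ and the density loss stays a bounded power of $p$ — is the technical heart of the argument. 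A secondary, more routine point is the initial reduction from arbitrary subsets to disjoint subsets in the definition of $\mathcal{R}_{H,p}$, and the final division by $n^{v(H)-2}$ to convert the edge-between-sets estimate into the statement $e(U) = \tfrac12 p|U|^2 \pm \varepsilon n^2$ of $\mathcal{P}_{K_2,p}^*(\varepsilon)$; these contribute only constant factors absorbed into the $O_H$.
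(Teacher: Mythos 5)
Your proposal correctly identifies the first routine step (passing from disjoint to arbitrary tuples, which is the paper's Lemma~\ref{lem:disjoint}), but the core of your plan, the ``telescoping / peeling'' argument, contains a genuine gap that you yourself flag and then claim to resolve by an unspecified ``cancellation.'' That cancellation does not exist. When you activate $U_a=A$, $U_b=B$ and leave the rest free, you learn that
\[
\sum_{u\in A,\ v\in B} 1_G(u,v)\,w'(u,v) \;=\; p^{e(H)}|A||B|\,n^{v(H)-2} \pm \delta n^{v(H)},
\]
where $w'(u,v)$ is the number of homomorphic extensions of the pair $(u,v)$ to $H$ ignoring the edge $v_av_b$. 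To peel off the edge $v_av_b$ you would need to replace this weighted edge count by $p\cdot e(A,B)\cdot (\text{something})$, i.e.\ you would need $w'(u,v)$ to be approximately constant across edges. But controlling $w'(u,v)$ is \emph{exactly} as hard as the theorem you are trying to prove: it requires degree/codegree regularity of $G$, which you do not yet have. Comparing the $(A,B)$ count against the $(V,V)$ count does not make the unknown weights cancel; it just gives two linear functionals of the same unknown function $1_G(u,v)w'(u,v)$, and nothing lets you isolate $\sum 1_G(u,v)$ from them. This is precisely why the counting lemma (Lemma~\ref{lem:counting}) only runs in the easy direction, from $\mathcal{P}^*_{K_2,p}$ to $\mathcal{R}'_{H,p}$: there you replace $1_G$ by $p$ one edge at a time and the residual weight is a bounded $\{0,1\}$-valued function, which is all you need. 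In your direction, iteratively deleting edges from $H$ produces counts of strict subgraphs $H'\subsetneq H$, about which $\mathcal{R}_{H,p}(\delta)$ says nothing, so the induction cannot even be started.

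The paper's actual route avoids peeling entirely. Fix a vertex $v_0\in H$ of degree $r$ and consider $c(u,v)$, the number of homomorphisms $\psi:H\to G$ with $\psi(v_1)=v$ and $\psi$ mapping the $r$ neighbours of $v_1$ into $N(u)$. Since $c(u,v)=c\bigl(H,G;\{v\},N(u),\dots,N(u),V,\dots,V\bigr)$, summing over $v$ in a set is again an instance of the $\mathcal{R}_{H,p}(\delta)$ hypothesis, which forces $c(u,v)\approx p^{e(H)}d^r(u)\,v(G)^{v(H)-r-1}$ and symmetrically in $v$. The triangle inequality then bounds $\sum_{u,v}|d^r(u)-d^r(v)|$ (Lemma~\ref{lem:main}); combined with the Erd\H{o}s--Goldberg--Pach--Spencer discrepancy result (Corollary~\ref{cor:discrepancy}) and the elementary power inequality (Lemma~\ref{lem:inequality}), this converts the degree-variance bound into the desired $\mathcal{P}^*_{K_2,q}$ for the true density $q$, and a final comparison of $H$-counts pins $q$ to $p$. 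Note that for $H=K_n$, Conlon, Fox, and Sudakov use the same degree-power mechanism, not a peeling argument, so the attribution in your second paragraph is also off. To repair your proposal you would essentially have to rediscover the degree-concentration step; the telescoping scheme alone cannot reach the conclusion.
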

The converse is a standard counting lemma; we show it in Lemma~\ref{lem:counting}.

We will begin by proving that $\mathcal{R}_{H,p}(\varepsilon)$ is
equivalent up to linear change of $\varepsilon$ to $\mathcal{R}'_{H,p}(\varepsilon)$,
which is the same condition with disjointness removed.
\begin{definition}
If $H$ is a fixed graph, we say $G$ satisfies $\mathcal{R}'_{H,p}(\varepsilon)$
if for every sequence of $v(H)$ (not necessarily disjoint) vertex
subsets $U_{1},\ldots,U_{v(H)}\subseteq G$,
\[
c(H,G;U_{1},\ldots,U_{v(H)})=p^{e(H)}\prod_{i=1}^{v(H)}|U_{i}|\pm\varepsilon v(G)^{v(H)}.
\]
\end{definition}
After this simple argument, we show that the argument of Conlon, Fox,
and Sudakov which proves Conjecture~\ref{conj:SSlinear} for $H=K_{n}$
extends naturally to all $H$ under the stronger condition $\mathcal{R}'_{H,p}(\varepsilon)$.
Note that up to linear change in $\varepsilon,$ $\mathcal{R}'_{H,p}(\varepsilon)$
is equivalent to $\mathcal{P}_{H,p}^{*}(\varepsilon)$ when $H$ is
a complete graph.

The original conjecture of Conlon, Fox, Sudakov remains open. By Theorem~\ref{thm:main}, it would suffice to show that $\mathcal{P}_{H,p}^{*}(\varepsilon)$
and $\mathcal{R}_{H,p}(\varepsilon)$ are equivalent up to linear
change in $\varepsilon$.
\begin{conjecture}
For any graph $H$ and real numbers $p\in[0,1]$, $\delta>0$,
we have
\[
\mathcal{P}_{H,p}^{*}(\delta)\implies\mathcal{R}_{H,p}(\varepsilon)
\]
for some $\varepsilon=O_{H,p}(\delta)$.
\end{conjecture}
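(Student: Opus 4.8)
The plan is to deduce $\mathcal{R}_{H,p}$ from $\mathcal{P}^*_{H,p}$ by a polarization argument; as I explain at the end, the last step is the essential obstruction and, modulo routine lemmas, is equivalent to Conjecture~\ref{conj:SSlinear}.

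\textbf{An exact inclusion--exclusion identity.} Write $r=v(H)$ and $n=v(G)$. Since $\mathcal{R}_{H,p}$ concerns pairwise disjoint sets, I would work with those throughout, so the reduction to $\mathcal{R}'_{H,p}$ is not needed here. First I would establish that for any pairwise disjoint $U_1,\ldots,U_r\subseteq V(G)$,
\[
\sum_{I\subseteq[r]}(-1)^{r-|I|}\,c\!\left(H,\,G\!\left[\,\textstyle\bigcup_{i\in I}U_i\,\right]\right)\;=\;\sum_{\pi}c\big(H,G;U_{\pi(1)},\ldots,U_{\pi(r)}\big),
\]
where $\pi$ runs over all permutations of $[r]$. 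This is proved by grouping homomorphisms $\phi\colon H\to G$ by which sets their vertices land in: if every $\phi(v_j)$ lies in $U_{y_j}$, then $\phi$ contributes $\sum_{I\supseteq\{y_1,\ldots,y_r\}}(-1)^{r-|I|}$ to the left side, and this equals $1$ when $\{y_1,\ldots,y_r\}=[r]$ (i.e.\ $j\mapsto y_j$ is a permutation) and $0$ otherwise --- which is exactly the set of homomorphisms counted with multiplicity on the right.

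\textbf{Polarization.} Next I would apply $\mathcal{P}^*_{H,p}(\delta)$ to each of the $2^r$ induced subgraphs on the left. Using disjointness, $|\bigcup_{i\in I}U_i|=\sum_{i\in I}|U_i|$, and the elementary identity $\sum_{I\subseteq[r]}(-1)^{r-|I|}\big(\sum_{i\in I}x_i\big)^r=r!\,x_1\cdots x_r$, the left side becomes $p^{e(H)}r!\prod_i|U_i|\pm 2^r\delta n^r$, so that
\[
\sum_{\pi}c\big(H,G;U_{\pi(1)},\ldots,U_{\pi(r)}\big)\;=\;p^{e(H)}\,r!\,\prod_{i=1}^{r}|U_i|\;\pm\;2^r\delta n^r .
\]
In other words, $\mathcal{P}^*_{H,p}(\delta)$ already pins down, with the desired linear dependence, the \emph{symmetrized} labelled $H$-count over all assignments of the sets to the vertices of $H$. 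When the automorphism group of $H$ is transitive on ordered vertex tuples --- that is, when $H$ is complete (the empty graph being trivial) --- all $r!$ summands coincide, and dividing by $r!$ gives $\mathcal{R}_{H,p}\big(O_r(\delta)\big)$, recovering the complete-graph case of Conjecture~\ref{conj:SSlinear}.

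\textbf{The hard part: desymmetrization.} For a graph $H$ whose vertices play genuinely different roles, the identity above controls only $\sum_\pi c(H,G;U_{\pi(1)},\ldots,U_{\pi(r)})$, and to finish one must isolate a single summand. I expect this to carry all the difficulty. No linear-algebraic refinement seems to help: $c(H,G[W])$ depends on $W$ only as a set, so any combination of these numbers --- including replacing indicators by arbitrary weights $f_i\colon V(G)\to[0,1]$, which by multilinearity is the most general such manipulation --- can only recover information symmetric under permuting the vertices of $H$, hence at best the symmetrized count once more. Breaking the symmetry seems to require using the structure of $H$ directly: the natural candidates are (i) induction on $v(H)$ by vertex deletion, which would need a vertex-deletion lemma for $\mathcal{P}^*_{H,p}$ with only linear loss --- none is known, Simonovits--S\'os giving it with tower-type loss --- and (ii) routing through edge-quasirandomness via Theorem~\ref{thm:main} and the converse counting lemma, which merely re-expresses the present statement as Conjecture~\ref{conj:SSlinear}. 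Since that conjecture is open and no linear vertex-deletion lemma is available, I view the desymmetrization step as the genuine content here, and the reason the statement remains unresolved.
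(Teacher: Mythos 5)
The statement you were asked to prove is not a theorem of the paper: it appears there as an open conjecture, with no proof given (the paper only remarks that the converse implication $\mathcal{R}_{H,p}(\delta)\implies\mathcal{P}^{*}_{H,p}(\varepsilon)$ is easy, and that the conjecture combined with Theorem~\ref{thm:main} would yield Conjecture~\ref{conj:SSlinear}). So there is no paper proof to compare against, and your decision not to claim a complete proof is the correct one. The partial content you do establish is sound: the inclusion--exclusion identity for pairwise disjoint $U_1,\ldots,U_r$ is verified correctly (a homomorphism whose vertices hit the index set $S$ contributes $\sum_{I\supseteq S}(-1)^{r-|I|}$, which vanishes unless $S=[r]$), and together with the polarization identity $\sum_{I\subseteq[r]}(-1)^{r-|I|}\big(\sum_{i\in I}x_i\big)^{r}=r!\,x_1\cdots x_r$ it gives, from $\mathcal{P}^{*}_{H,p}(\delta)$, the symmetrized estimate $\sum_{\pi}c(H,G;U_{\pi(1)},\ldots,U_{\pi(r)})=p^{e(H)}r!\prod_i|U_i|\pm 2^{r}\delta\, v(G)^{r}$ with only linear loss. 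For complete $H$, where $\mathrm{Aut}(K_r)=S_r$ makes all $r!$ summands equal, this does yield the conjecture itself with $\varepsilon=2^{r}\delta/r!$ (and, combined with Theorem~\ref{thm:main}, the complete-graph case of Conjecture~\ref{conj:SSlinear}), consistent with the paper's remark that $\mathcal{R}'_{H,p}$ and $\mathcal{P}^{*}_{H,p}$ agree up to linear change of $\varepsilon$ when $H$ is complete.

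Your diagnosis of where the difficulty lies is also accurate: the genuine obstruction is extracting a single summand from the symmetrized count, and the statement is indeed equivalent to Conjecture~\ref{conj:SSlinear} modulo the paper's results --- the conjecture plus Theorem~\ref{thm:main} gives Conjecture~\ref{conj:SSlinear}, while Conjecture~\ref{conj:SSlinear} plus Lemma~\ref{lem:counting} gives the conjecture (since $\mathcal{R}'_{H,p}$ trivially implies $\mathcal{R}_{H,p}$). One caveat: your claim that no weighted or linear-algebraic manipulation of the quantities $c(H,G[W])$ can break the symmetry is a heuristic, not a rigorous impossibility argument, since the hypothesis $\mathcal{P}^{*}_{H,p}(\delta)$ could in principle be exploited in ways other than plugging sets into the counting functional; as an explanation of why your particular approach stalls, however, it is fair, and the statement remains open in the paper as well.
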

The other direction is easy by inclusion-exclusion.

\section{Preliminaries}

Here we reduce $\mathcal{R}_{H,p}(\varepsilon)$ to $\mathcal{R}'_{H,p}(\varepsilon)$
and then survey some standard lemmas that we will need from graph
theory.
\begin{lemma}
\label{lem:disjoint}For any graph $H$ and real numbers $p\in[0,1]$,
$\delta>0$, we have
\[
\mathcal{R}_{H,p}(\delta)\implies\mathcal{R}'_{H,p}(\varepsilon)
\]
for some $\varepsilon=O_{H}(\delta)$.
\end{lemma}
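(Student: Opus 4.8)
The plan is to deduce $\mathcal{R}'_{H,p}(\varepsilon)$ from $\mathcal{R}_{H,p}(\delta)$ by expressing the count $c(H,G;U_1,\ldots,U_r)$ over arbitrary (possibly overlapping) subsets as a signed sum of counts over \emph{disjoint} subsets, and then bounding the number of terms. Fix the sequence $U_1,\ldots,U_r$ (with $r=v(H)$). Since each copy of $H$ counted by $c(H,G;U_1,\ldots,U_r)$ assigns to every vertex $v_i$ an image in $U_i$, I want to partition $V(G)$ into atoms according to which of the $U_i$'s a vertex lies in; but the number of atoms ($2^r$) is fine as a function of $H$, while the subtler point is that a single atom can receive images of several distinct $v_i$'s, so the parts are not automatically disjoint in the way $\mathcal{R}_{H,p}$ requires. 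To handle this I will instead run an inclusion–exclusion directly on the overlaps: write each $c(H,G;\,\cdot\,)$ with disjoint arguments, summing over all ways the copy could ``really'' use disjoint vertex sets, with signs.

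Concretely, I would proceed as follows. First, reduce to the case where the $U_i$ are either equal or disjoint: given arbitrary $U_1,\dots,U_r$, consider the Boolean atoms $A_S=\bigcap_{i\in S}U_i\setminus\bigcup_{i\notin S}U_i$ for $S\subseteq[r]$, which are pairwise disjoint and refine every $U_i$. Then $c(H,G;U_1,\dots,U_r)=\sum c(H,G;A_{S_1},\dots,A_{S_r})$ where the sum is over all tuples $(S_1,\dots,S_r)$ with $i\in S_i$ for each $i$; this is an exact identity with $2^{r(r-1)}$ or fewer terms, each term depending only on $H$. Each summand now has arguments that are pairwise \emph{disjoint or identical}. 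Second, for a summand in which some atoms coincide, apply a second inclusion–exclusion: if arguments $j_1,\dots,j_k$ all equal a common atom $A$ of size $m$, split $A$ arbitrarily into $k$ disjoint pieces of size $\lfloor m/k\rfloor$ or $\lceil m/k\rceil$, and write $c(H,G;\dots,A,\dots,A,\dots)$ as a signed sum over surjection-type patterns of $c$ evaluated on disjoint pieces — formally, $c$ with repeated argument $A$ equals $\sum_{\pi}(\pm1)\,c(H,G;\text{(pieces according to }\pi))$, summing over set partitions $\pi$ of $\{j_1,\dots,j_k\}$, where for each block of $\pi$ all its indices get the same piece and distinct blocks get distinct pieces; the inner counts then have strictly fewer distinct coincidences, and one induces on the number of coincident pairs. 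Third, each of the finitely many (a function of $H$ only) terms at the bottom of this recursion is a count $c(H,G;W_1,\dots,W_r)$ with the $W_i$ pairwise disjoint, so $\mathcal{R}_{H,p}(\delta)$ gives $c(H,G;W_1,\dots,W_r)=p^{e(H)}\prod|W_i|\pm\delta n^r$. Summing, the main terms telescope back to $p^{e(H)}\prod_{i=1}^r|U_i|$ by the same identities applied to the polynomial $p^{e(H)}\prod|\cdot|$ (here I use that splitting $A$ into pieces of sizes summing to $|A|$ makes the product terms combine correctly up to a negligible $O(n^{r-1})$ rounding error), and the error terms contribute at most $(\text{number of terms})\cdot\delta n^r = O_H(\delta)\,n^r$.

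The main obstacle I expect is bookkeeping the signs and the rounding so that the \emph{main terms} really do reassemble to $p^{e(H)}\prod|U_i|$ rather than to something off by a lower-order-in-$n$ but not-obviously-negligible amount; in particular, when an atom $A$ is chopped into $k$ nearly-equal pieces, $\prod$ over those pieces is $\prod$ of sizes like $(|A|/k)$, and the inclusion–exclusion over partitions $\pi$ must be verified to collapse $\sum_\pi(\pm1)\prod(\text{piece sizes})$ to $|A|^{(\text{number of indices})}$ up to an $O(n^{r-1})$ error — this is a finite identity for each fixed $H$, so it costs only a constant (in $n$) number of applications of $\mathcal{R}_{H,p}(\delta)$ and an additive $O_H(n^{r-1})=o(n^r)$, which is absorbed into $\varepsilon n^r$ for $n$ large (and for small $n$ the statement is vacuous after enlarging the implied constant). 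Everything else is routine: the total number of bottom-level terms is bounded by a function of $r$ alone, so the accumulated error is $O_H(\delta)\,n^r$, giving $\varepsilon=O_H(\delta)$ as claimed.
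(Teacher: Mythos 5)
Your first reduction (refining each $U_i$ into the Boolean atoms $A_S=\bigcap_{i\in S}U_i\setminus\bigcup_{i\notin S}U_i$ and expanding additively) is correct and is a reasonable alternative to the paper's scheme of relaxing disjointness constraints one pair at a time via an auxiliary ``constraint graph'' $K$. After that step you have reduced to counts $c(H,G;W_1,\ldots,W_r)$ whose arguments are pairwise equal or disjoint, and the number of such terms is a constant depending only on $H$, so this part is sound.

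The gap is in your second step, the ``signed sum over set partitions.'' As written the identity does not hold, and I do not see how to repair it with signs. Test it at $k=2$: split $A=B_1\sqcup B_2$ into two nearly equal halves. The additive (no signs) expansion
\[
c(H,G;A,A,\ldots)=c(H,G;B_1,B_1,\ldots)+c(H,G;B_1,B_2,\ldots)+c(H,G;B_2,B_1,\ldots)+c(H,G;B_2,B_2,\ldots)
\]
is exact and its main terms do reassemble to $|A|^2$, but the terms $c(B_1,B_1,\ldots)$ and $c(B_2,B_2,\ldots)$ still have the \emph{same} coincidence (positions $1$ and $2$ identical), merely over smaller sets; and the coarsest partition in your signed sum likewise has exactly as many coincident pairs as before, so ``strictly fewer distinct coincidences'' fails. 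If instead you try to terminate by recursively halving down to singletons, the number of applications of $\mathcal{R}_{H,p}(\delta)$ grows like $|A|$, giving a total error $\Omega(\delta v(G)^{v(H)+1})$ rather than $O_H(\delta)v(G)^{v(H)}$. Conversely, if you introduce signs to kill the coincidence terms, the main-term identity breaks: you cannot produce $|A|^2$ from $\pm(|A|/2)^2\pm(|A|/2)^2$. The paper resolves exactly this point with a probabilistic averaging trick that avoids recursion entirely. To control $c(H,G;U,U,U_3,\ldots)$, pick a uniformly random equitable bipartition $U=U_1'\sqcup U_2'$. Each homomorphism with $\phi(v_1)\neq\phi(v_2)$ is counted in $c(H,G;U_1',U_2',U_3,\ldots)$ with probability $\tfrac14(1+o(1))$, and those with $\phi(v_1)=\phi(v_2)$ number $O(v(G)^{v(H)-1})$; hence $\mathbb{E}\bigl[c(H,G;U_1',U_2',\ldots)\bigr]=\tfrac14 c(H,G;U,U,\ldots)+O(v(G)^{v(H)-1})$. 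Since $U_1',U_2'$ are disjoint, every realization of the inner count is controlled by $\mathcal{R}_{H,p}(\delta)$, so the expectation is too, and reversing the identity gives the needed bound with a constant factor $4+o(1)$. You would need an idea of this sort in place of your partition inclusion--exclusion.
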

\begin{proof}
If $K$ is a graph on the integers in $[1,v(H)]$, we say a graph
$G$ satisfies condition $\mathcal{R}_{H,p}^{K}(\varepsilon)$ if
for every sequence of vertex subsets $U_{1},\ldots,U_{v(H)}\subseteq G$
such that $U_{i}\cap U_{j}=\emptyset$ whenever $(i,j)$ is an edge
of $K$,
\[
c(H,G;U_{1},\ldots,U_{v(H)})=p^{e(H)}\prod_{i=1}^{v(H)}|U_{i}|\pm\varepsilon v(G)^{v(H)}.
\]

When $K$ is complete, $\mathcal{R}_{H,p}^{K}(\varepsilon)$ is exactly
$\mathcal{R}_{H,p}(\varepsilon)$ and when $K$ is empty it is $\mathcal{R}'_{H,p}(\varepsilon)$.
To prove the lemma inductively, it suffices to show that if $K'$
is a graph containing $K$ with has one more edge, $\mathcal{R}_{H,p}^{K'}(\delta)\implies\mathcal{R}_{H,p}^{K}(\varepsilon)$
for some $\varepsilon=O_{H}(\delta)$. Let $G$ satisfy $\mathcal{R}_{H,p}^{K'}(\delta)$.
Without loss of generality, suppose $K'=K\cup(1,2)$. Let $U_{1},\ldots,U_{v(H)}$
be a sequence of vertex subsets of $G$ such that $U_{i}\cap U_{j}=\emptyset$
whenever $(i,j)$ is an edge of $K$. We will to show that
\[
c(H,G;U_{1},\ldots,U_{v(H)})=p^{e(H)}\prod_{i=1}^{v(H)}|U_{i}|\pm(6+o(1))\delta v(G)^{v(H)},
\]
where the $o(1)$ goes to zero as a function of $v(G)$.

If $U_{1}\cap U_{2}=\emptyset$ we are immediately done, since $\mathcal{R}_{H,p}^{K'}(\varepsilon)$
already applies. Otherwise, write
\begin{eqnarray*}
c(H,G;U_{1},U_{2},\ldots) & = c(H,G;U_{1}\backslash U_{2},U_{2},\ldots) + c(H,G;U_{1}\cap U_{2},U_{2}\backslash U_{1},\ldots) \\
& + c(H,G;U_{1}\cap U_{2},U_{1}\cap U_{2},\ldots).
\end{eqnarray*}

Directly applying $\mathcal{R}_{H,p}^{K'}(\delta)$ to the first two
terms, we obtain
\begin{eqnarray*}
c(H,G;U_{1},U_{2},\ldots) & = & p^{e(H)}\Big(|U_{1}||U_{2}|-|U_{1}\cap U_{2}|^{2}\Big)\prod_{i=3}^{v(H)}|U_{i}|\pm2\delta v(G)^{v(H)}\\
 &  & +c(H,G;U_{1}\cap U_{2},U_{1}\cap U_{2},\ldots).
\end{eqnarray*}

It remains to show that for any $U\subseteq V(G)$,
\[
c(H,G;U,U,U_{3},\ldots,U_{v(H)})=p^{e(H)}|U|^{2}\prod_{i=3}^{v(H)}|U_{i}|\pm(4+o(1))\delta v(G)^{v(H)}.
\]

For this, we may assume (by adding a vertex if necessary) that $|U|$
is even, since a single vertex lies in $O(v(G)^{v(H)-1})$ copies
of $H$. Pick $U_{1}',U_{2}'$ to be a uniform random equitable bipartition
of $U$, i.e. dividing $U$ into two subsets of equal order. The number
of homomorphisms $\phi:H\rightarrow G$ with $\phi(v_{1})=\phi(v_{2})$
is $O(v(G)^{v(H)-1})$. Apart from these, each homomorphism in $c(H,G,U,U,U_{3},\ldots)$
is counted in $c(H,G;U_{1}',U_{2}',U_{3},\ldots)$ with probability
$1/4$, and so by linearity of expectation,
\[
\mathbb{E}[c(H,G;U_{1}',U_{2}',U_{3},\ldots)]=\frac{1}{4}c(H,G;U,U,U_{3},\ldots)+O(v(G)^{v(H)-1}).
\]

On the other hand, because $U_{1}'$ and $U_{2}'$ are disjoint, the
expression inside the expectation is bounded by the property $\mathcal{R}_{H,p}^{K'}(\delta)$.
Thus, reversing the last equation gives
\begin{eqnarray*}
c(H,G;U,U,U_{3},\ldots) & =4\mathbb{E}[c(H,G;U_{1}',U_{2}',U_{3},\ldots)]+O(v(G)^{v(H)-1})\\
 & =p^{e(H)}\prod_{i=1}^{v(H)}|U_{i}|\pm(4+o(1))\delta v(G)^{v(H)},
\end{eqnarray*}
as desired. In particular, since the induction takes $\binom{v(H)}{2}$
steps and at each step the constant factor is at most $6+o(1)$, we
have proved that $\mathcal{R}_{H,p}(\delta)\implies\mathcal{R}'_{H,p}(\varepsilon)$
for some $\varepsilon>0$ satisfying
\[
\varepsilon\le\Big(6^{\binom{v(H)}{2}}+o(1)\Big)\delta.
\]\end{proof}
The next lemma is needed to give a preliminary lower bound on the
edge density of a graph satisfying $\mathcal{R}_{H,p}(\delta)$. It
is a corollary of a stronger result of Alon~\cite{Alon}, who determined
the asymptotic order of the maximum number of copies of $H$ in any
graph $G$ with a prescribed number of edges.
\begin{lemma}
\label{lem:maxcopies}If $H$ is a graph with no isolated vertices,
then for any graph $G$,
\[
c(H,G)=O(e(G)^{v(H)}).
\]
\end{lemma}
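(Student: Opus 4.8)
The plan is to dominate $c(H,G)$ by the number of homomorphisms into $G$ of a spanning subgraph of $H$ with few edges. Since $H$ has no isolated vertex, $E(H)$ is an edge cover of $V(H)$; fix a minimal edge cover $F\subseteq E(H)$, meaning that no single edge of $F$ may be deleted without leaving some vertex of $H$ uncovered. Every homomorphism $H\to G$ is in particular a homomorphism $(V(H),F)\to G$, since the latter condition imposes only a subset of the adjacency constraints, so $c(H,G)\le c\bigl((V(H),F),G\bigr)$ and it suffices to bound the right-hand side.

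First I would invoke the standard observation that a minimal edge cover is a vertex-disjoint union of stars. Indeed $F$ is acyclic --- an edge lying on a cycle of $F$ has both endpoints still covered by the other cycle-edges, hence could be removed --- and $F$ contains no path on four vertices, since the middle edge of such a path is likewise removable. So each connected component of $F$ is a tree of diameter at most two, i.e.\ a star $K_{1,s}$ with $s\ge 1$. Writing the components as $S_1,\dots,S_t$ with $S_i$ having $s_i\ge 1$ leaves, the fact that these stars partition $V(H)$ gives $\sum_{i=1}^{t}(s_i+1)=v(H)$, hence $\sum_{i=1}^{t}s_i=v(H)-t\le v(H)-1$, where $t\ge 1$ because $H$ has an edge (the case $v(H)=0$ being trivial).

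Next I would count homomorphisms $\phi$ of $(V(H),F)$ into $G$ one star at a time, the constraints decoupling across the vertex-disjoint components. A homomorphism of the star $S_i$ is obtained by choosing an image $w\in V(G)$ of its center and then mapping each of its $s_i$ leaves to a neighbour of $w$, giving at most $\sum_{w\in V(G)}d_G(w)^{s_i}$ possibilities; and since $s_i\ge 1$ and the degrees are nonnegative, expanding the power shows $\sum_{w}d_G(w)^{s_i}\le\bigl(\sum_{w}d_G(w)\bigr)^{s_i}=(2e(G))^{s_i}$. Multiplying over $i$,
\[
c(H,G)\ \le\ \prod_{i=1}^{t}(2e(G))^{s_i}\ =\ (2e(G))^{\sum_i s_i}\ \le\ (2e(G))^{v(H)-1}\ =\ O\bigl(e(G)^{v(H)}\bigr),
\]
which is the assertion (in fact with the better exponent $v(H)-1$, and, choosing $F$ of minimum size, even $v(H)-\nu(H)$ where $\nu(H)$ is the matching number of $H$).

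This argument has no real bottleneck; the only things that need care are the structural description of minimal edge covers as star forests and the bookkeeping $\sum_i s_i = v(H)-t$. I will remark that the genuinely sharp exponent is neither $v(H)$ nor $v(H)-\nu(H)$ but the fractional edge cover number $\rho^*(H)$, which is the content of Alon's theorem~\cite{Alon}; but the crude bound above is all that we shall need.
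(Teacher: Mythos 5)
Your argument is correct, and it takes a genuinely different (and more self-contained) route than the paper. The paper supplies no proof at all for this lemma: it simply cites it as a corollary of Alon's theorem on the maximum number of copies of a fixed graph in graphs with a prescribed number of edges, which gives the sharp exponent $\rho^*(H)$ (the fractional edge cover number). You instead pass to the spanning star forest $(V(H),F)$ underlying a minimal edge cover $F$ --- your verification that a minimal edge cover is acyclic and $P_4$-free, hence a disjoint union of stars, is sound --- and count homomorphisms component by component, bounding the contribution of a star with $s$ leaves by $\sum_w d_G(w)^s\le(2e(G))^s$ and using $\sum_i s_i=v(H)-t\le v(H)-1$. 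This is elementary, avoids the external citation, and actually yields the sharper exponent $v(H)-1$ (or $v(H)-\nu(H)$ if one takes a minimum edge cover), which strictly exceeds what the lemma asserts. Alon's theorem buys the optimal exponent $\rho^*(H)$, but that precision is not needed here, and your direct proof is a perfectly good replacement. One harmless detail worth making explicit: to pass from $(2e(G))^{v(H)-1}$ to $O(e(G)^{v(H)})$ one should note that if $e(G)=0$ then $c(H,G)=0$ since $H$ has an edge, and if $e(G)\ge 1$ then $e(G)^{v(H)-1}\le e(G)^{v(H)}$.
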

We also recall a standard counting lemma, see for example Section
10.5 of Lov\' asz' problem book~\cite{Lovasz}. It tells us how to
count copies of $H$ given quasirandomness. If $A,B\subseteq V(G)$,
let $e(A,B)=c(K_{2},G;A,B)$ be the number of edges between $A$ and
$B$, defined so that if $A$ and $B$ intersect we count each edge
within $G[A\cap B]$ twice. In particular, $e(A,A)=2e(A)$, since
the former counts labelled edges.
\begin{lemma}
\label{lem:counting} If $G$ is a graph which satisfies $\mathcal{P}_{K_{2},p}^{*}(\delta)$
then $G$ satisfies $\mathcal{R}_{H,p}^{'}(4e(H)\delta)$ for all
graphs $H$.
\end{lemma}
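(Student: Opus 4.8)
The plan is to prove this as the standard two-step counting lemma. First I would upgrade the hereditary edge-count hypothesis $\mathcal P_{K_2,p}^*(\delta)$ to a \emph{bipartite} edge-count estimate valid for arbitrary, not necessarily disjoint, pairs of vertex sets, and then run a hybrid/telescoping argument over the edges of $H$, replacing the indicator of one edge by the constant $p$ at a time.

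For the bipartite estimate I claim $\mathcal P_{K_2,p}^*(\delta)$ implies
\[
c(K_2,G;A,B)=p|A||B|\pm 4\delta\,v(G)^2\qquad\text{for all }A,B\subseteq V(G).
\]
When $A$ and $B$ are disjoint this is immediate from the identity $c(K_2,G[A\cup B])=c(K_2,G[A])+c(K_2,G[B])+2c(K_2,G;A,B)$ together with three applications of the hypothesis and the expansion $|A\cup B|^2-|A|^2-|B|^2=2|A||B|$; the resulting error is $\tfrac32\delta\,v(G)^2$. For general $A,B$ I would split the second argument as $B=(B\setminus A)\sqcup(A\cap B)$ and then the first as $A=(A\setminus B)\sqcup(A\cap B)$, obtaining
\[
c(K_2,G;A,B)=c(K_2,G;A,B\setminus A)+c(K_2,G;A\setminus B,A\cap B)+c(K_2,G[A\cap B]);
\]
the first two terms are between disjoint sets, the three main terms telescope to $p|A||B|$, and the errors sum to $(\tfrac32+\tfrac32+1)\delta\,v(G)^2=4\delta\,v(G)^2$.

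For the telescoping step, write $n=v(G)$, $r=v(H)$, $m=e(H)$, list the edges of $H$ as $f_1,\dots,f_m$ with $f_t=v_{a_t}v_{b_t}$, and abbreviate $\mathbf 1_s=\mathbf 1[\phi(v_{a_s})\phi(v_{b_s})\in E(G)]$. For $0\le t\le m$ set
\[
S_t=\sum_{\substack{\phi:V(H)\to V(G)\\ \phi(v_i)\in U_i\ \forall i}}\Big(\prod_{s\le t}\mathbf 1_s\Big)p^{\,m-t},
\]
so that $S_0=p^m\prod_{i=1}^{r}|U_i|$ and $S_m=c(H,G;U_1,\dots,U_r)$; it then suffices to prove $|S_t-S_{t-1}|\le 4\delta n^r$ for each $t$. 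Fixing $\phi(v_i)=u_i$ for all $i\notin\{a_t,b_t\}$, the part of $S_t-S_{t-1}$ coming from this choice is $p^{m-t}\sum_{x\in U_{a_t},\,y\in U_{b_t}}\big(\prod_{s<t}\mathbf 1_s\big)(\mathbf 1[xy\in E(G)]-p)$. Since $H$ is simple, no $f_s$ with $s<t$ equals $f_t$, so each such $f_s$ contains at most one of $v_{a_t},v_{b_t}$; hence for this fixed $(u_i)$ the factor $\prod_{s<t}\mathbf 1_s$ factors as $C\,\alpha(x)\,\beta(y)$ with $C,\alpha(x),\beta(y)\in\{0,1\}$, where $\alpha$ collects the constraints from edges through $v_{a_t}$ and $\beta$ those from edges through $v_{b_t}$. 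With $A'=\{x\in U_{a_t}:\alpha(x)=1\}$ and $B'=\{y\in U_{b_t}:\beta(y)=1\}$ this inner sum equals $C\big(c(K_2,G;A',B')-p|A'||B'|\big)$, which by the bipartite estimate has absolute value at most $4\delta n^2$. Summing over the at most $n^{r-2}$ choices of $(u_i)$ and using $p^{m-t}\le1$ gives $|S_t-S_{t-1}|\le 4\delta n^r$, and summing over $t$ yields $\big|c(H,G;U_1,\dots,U_r)-p^{e(H)}\prod_{i=1}^{r}|U_i|\big|\le 4e(H)\delta\,v(G)^{v(H)}$, i.e.\ $\mathcal R'_{H,p}(4e(H)\delta)$. (Isolated vertices of $H$, and the degenerate case $e(H)=0$, need no special treatment: such vertices are simply among the fixed coordinates $(u_i)$ and contribute a factor $|U_i|$ to both $S_0$ and $S_m$.)

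The argument is essentially routine; if anything requires care it is the two bookkeeping points above — obtaining the clean constant $4$ in the bipartite estimate by decomposing into two disjoint pairs plus the diagonal term rather than via a naive four-term split, and the factorization $\prod_{s<t}\mathbf 1_s=C\,\alpha(x)\,\beta(y)$, which is exactly what simplicity of $H$ provides.
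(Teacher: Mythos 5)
Your proof is correct and follows essentially the same two-step strategy as the paper: derive the bipartite estimate $c(K_2,G;A,B)=p|A||B|\pm 4\delta v(G)^2$ from the hereditary hypothesis, then telescope over the edges of $H$, factoring each hybrid difference into a product of $\{0,1\}$-valued functions of the two active coordinates and applying the bipartite estimate, for a total error of $4e(H)\delta v(G)^{v(H)}$. The only cosmetic difference is in obtaining the bipartite estimate (the paper uses the one-shot inclusion--exclusion identity $e(A,B)=e(A\cup B)+e(A\cap B)-e(A\setminus B)-e(B\setminus A)$, whereas you first treat disjoint pairs and then decompose a general pair into two disjoint rectangles plus the diagonal square), but both routes yield the same constant $4$.
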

\begin{proof}
Suppose $G$ satisfies $\mathcal{P}_{K_{2},p}^{*}(\delta)$. We wish
to show that for any graph $H$ (on vertices $v_{1},\ldots,v_{v(H)}$)
and any sets $U_{1},\ldots,U_{v(H)}\subseteq V(G)$,
\begin{equation}
c(H,G;U_{1},\ldots,U_{v(H)})=p^{e(H)}\prod_{i=1}^{v(H)}|U_{i}|\pm4e(H)\delta v(G)^{v(H)}.\label{eq:counting general}
\end{equation}

Let $c_{H}=c(H,G;U_{1},\ldots,U_{v(H)})$ be the desired homomorphism
count. To prove \eqref{eq:counting general}, we will expand $c_{H}$
as a sum involving the indicator functions of edges, and prove that
it is possible to approximate these indicator functions with the constant
function $p$. Since 
\[
e(A,B)=e(A\cup B)+e(A\cap B)-e(A\backslash B)-e(B\backslash A),
\]
the fact that $G$ satisfies $\mathcal{P}_{K_{2},p}^{*}(\delta)$
implies that for every pair of vertex subsets $A,B\subseteq V(G)$,
\[
\Big|e(A,B)-p|A||B|\Big|\le4\delta v(G)^{2}.
\]

Let $1_{G}(u,v)$ be the indicator function of edges of $G$. Another
way of writing the above inequality is that for any two functions
$f,g:V(G)\rightarrow\{0,1\}$ (which will be the indicator functions
of some two sets $A$ and $B$),
\begin{equation}
\Big|\sum_{u,v\in V(G)}f(u)g(v)(1_{G}(u,v)-p)\Big|\le4\delta v(G)^{2}.\label{eq:regularity1}
\end{equation}

We can expand $c_{H}$ in terms of the indicator function $1_{G}$,
giving
\[
c_{H}=\sum_{(u_{1},\ldots,u_{v(H)})}\prod_{(v_{i},v_{j})\in E(H)}1_{G}(u_{i},u_{j}),
\]
where the sum is over all $v(H)$-tuples of vertices $(u_{i})_{i=1}^{v(H)}$
with $u_{i}\in U_{i}$. For a spanning subgraph $H'\subseteq H$,
define
\[
c_{H,H'}=\sum_{(u_{1},\ldots,u_{v(H)})}p^{e(H)-e(H')}\prod_{(v_{i},v_{j})\in E(H')}1_{G}(u_{i},u_{j}),
\]
the sum obtained by replacing $1_{G}(u_{i},u_{j})$ by $p$ for all
the edges $(v_{i},v_{j})$ of $H$ not in $H'$. Let $H_{0}\subset H_{1}\subset\cdots\subset H_{e(H)} = H$
be a maximal filtration of $H$ by spanning subgraphs, so that for
each $1\le k\le e(H)$, $H_{k}$ has exactly one more edge than $H_{k-1}$.
Let $e_{k}=(v_{i_{k}},v_{j_{k}})$ be the edge introduced in $H_{k}$.
We will show that for all $1\le k\le e(H)$,
\begin{equation}
|c_{H,H_{k}}-c_{H,H_{k-1}}|\le4\delta v(G)^{v(H)}.\label{eq:counting-induction}
\end{equation}

We also know that
\[
c_{H,H_{0}}=\sum_{(u_{1},\ldots,u_{v(H)})}p^{e(H)}=p^{e(H)}\prod_{i=1}^{v(H)}|U_{i}|,
\]
so since $c_{H,H_{e(H)}}=c_{H}$, summing inequality \eqref{eq:counting-induction}
over $k$ and applying the triangle inequality would complete the
proof of \eqref{eq:counting general}. 

Notice that
\begin{equation}
c_{H,H_{k}}-c_{H,H_{k-1}}=\sum_{(u_{1},\ldots,u_{v(H)})}p^{e(H)-k}(1_{G}(u_{i_{k}},u_{j_{k}})-p)\prod_{(v_{i},v_{j})\in E(H_{k-1})}1_{G}(u_{i},u_{j}).\label{eq:counting-difference}
\end{equation}
In the product on the right, all factors depend on at most one of
$u_{i_{k}}$ and $u_{j_{k}}$. Let $T_{k}=\prod_{i\not\in\{i_{k},j_{k}\}}U_{i}$
be the set of all $(v(H)-2)$-tuples of choices of all $u_{i}$ except
for $u_{i_{k}}$ and $u_{j_{k}}$. Write $1_{U}(\cdot)$ for the indicator
function of a vertex set $U$. For each tuple $t\in T_{k}$, we may
define $\{0,1\}$-valued functions
\begin{eqnarray*}
f_{t}(u) & = & 1_{U_{i_{k}}}(u)\prod_{(v_{i_{k}},v_{j})\in E(H_{k-1})}1_{G}(u,u_{j})\\
g_{t}(u) & = & 1_{U_{j_{k}}}(u)\prod_{(v_{i},v_{j_{k}})\in E(H_{k-1})}1_{G}(u_{i},u)
\end{eqnarray*}
which depend on the choice of $t\in T_{k}$ via the choices
of $u_{i}$. Define $H_{k}^{*}$ to be the graph obtained by removing
the two vertices $u_{i_{k}}$ and $u_{j_{k}}$ from $H_{k}$. Then,
we can separate out the factors in the sum in \eqref{eq:counting-difference}
that depend on $u_{i_{k}},u_{j_{k}}$ to find
\[
c_{H,H_{k}}-c_{H,H_{k-1}}=p^{e(H)-k}\sum_{t\in T_{k}}\prod_{(v_{i},v_{j})\in H_{k}^{*}}1_{G}(u_{i},u_{j})\sum_{u_{i_{k}},v_{i_{k}}\in V(G)}(1_{G}(u_{i_{k}},u_{j_{k}})-p)f_{t}(u_{i_{k}})g_{t}(u_{j_{k}}).
\]

The absolute value of the inner double
sum is bounded by $4\delta v(G)^{2}$ because of inequality \eqref{eq:regularity1},
and the product term takes values in $\{0,1\}$. Thus,
\[
|c_{H,H_{k}}-c_{H,H_{k-1}}|\le p^{e(H)-k}v(G)^{v(H)-2}\cdot4\delta v(G)^{2}\le4\delta v(G)^{v(H)},
\]
which proves \eqref{eq:counting-induction}. It follows that
\[
c_{H}=c_{H,H_{e(H)}}=c_{H,H_{0}}+\sum_{k=1}^{e(H)}(c_{H,H_{k}}-c_{H,H_{k-1}})=p^{e(H)}\prod_{i=1}^{v(H)}|U_{i}|\pm4e(H)\delta v(G)^{v(H)},
\]
as desired.
\end{proof}
Finally, we require a lemma of Erd\H os, Goldberg, Pach, and Spencer
\cite{ErdosGoldbergPach} on discrepancy.
\begin{lemma}
\label{lem:discrepancy}Let $G$ be a graph with edge density $q$.
If there is a subset $S\subseteq V(G)$ for which $|e(S)-q\binom{|S|}{2}|\ge D$,
then there exists a set $S'\subseteq V(G)$ of order $\frac{1}{2}v(G)$
such that 
\[
\Big|e(S')-q\binom{|S'|}{2}\Big|\ge(\frac{1}{4}-o(1))D,
\]
where $o(1)$ goes to zero as $D\rightarrow\infty$.
\end{lemma}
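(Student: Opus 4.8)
The plan is to prove the lemma with two first-moment arguments, split according to the size of $S$. Write $n=v(G)$, and for $T\subseteq V(G)$ and disjoint $A,B\subseteq V(G)$ put $\mathrm{disc}(T)=e(T)-q\binom{|T|}{2}$ and $\mathrm{disc}(A,B)=e(A,B)-q|A||B|$, so that the hypothesis reads $|\mathrm{disc}(S)|\ge D$ and the goal is a set $S'$ with $|S'|=n/2$ and $|\mathrm{disc}(S')|\ge(\tfrac14-o(1))D$. Since $D\le\binom{n}{2}$, the regime $D\to\infty$ forces $n\to\infty$; I will assume $n$ is even (otherwise replace $n/2$ by $\lfloor n/2\rfloor$ everywhere below, at no cost), and, replacing $G$ by its complement if necessary --- which negates every $\mathrm{disc}(\cdot)$ and changes the density to $1-q$, but affects nothing since the conclusion involves only $|\cdot|$ --- I may assume $\mathrm{disc}(S)=D>0$. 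The one structural fact I will use is the identity
\[
\mathrm{disc}(S)+\mathrm{disc}(S,\overline S)+\mathrm{disc}(\overline S)=0,\qquad \overline S:=V(G)\setminus S,
\]
which is immediate from $e(S)+e(S,\overline S)+e(\overline S)=e(G)=q\binom{n}{2}$ together with $\binom{n}{2}=\binom{|S|}{2}+|S|\,|\overline S|+\binom{|\overline S|}{2}$.

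The building block is a ``pass to a random subset'' step: if $T\subseteq V(G)$ has $|T|\ge n/2$ and $S'$ is a uniformly random $\tfrac n2$-element subset of $T$, then a fixed pair of vertices of $T$ survives in $S'$ with probability $\binom{n/2}{2}/\binom{|T|}{2}$, so by linearity of expectation $\mathbb{E}[\mathrm{disc}(S')]=\bigl(\binom{n/2}{2}/\binom{|T|}{2}\bigr)\mathrm{disc}(T)$; as $n/2\le|T|\le n$, the coefficient lies in $\bigl[\tfrac14(1-\tfrac1{n-1}),\,1\bigr]$, so some balanced $S'$ has $|\mathrm{disc}(S')|\ge(\tfrac14-o(1))|\mathrm{disc}(T)|$. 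Taking $T=S$ settles the case $|S|\ge n/2$, and taking $T=\overline S$ (which has more than $n/2$ vertices) settles the case $|S|<n/2$ whenever $|\mathrm{disc}(\overline S)|\ge D$.

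This leaves the principal case $|S|<n/2$ together with $|\mathrm{disc}(\overline S)|<D$, where neither $S$ nor $\overline S$ is close enough to balanced on its own; here I would enlarge $S$ directly. Put $m=\tfrac n2-|S|\ge1$ and $\bar s=|\overline S|$ (so $0<m<\bar s/2$ since $|S|\ge1$), pick a uniformly random $m$-element subset $R\subseteq\overline S$, and set $S'=S\cup R$, which has exactly $n/2$ vertices. Since a vertex of $\overline S$ lies in $R$ with probability $m/\bar s$ and a pair with probability $\binom{m}{2}/\binom{\bar s}{2}$, expanding $e(S')=e(S)+e(S,R)+e(R)$ and taking expectations gives
\[
\mathbb{E}[\mathrm{disc}(S')]=\mathrm{disc}(S)+\frac{m}{\bar s}\,\mathrm{disc}(S,\overline S)+\frac{\binom{m}{2}}{\binom{\bar s}{2}}\,\mathrm{disc}(\overline S).
\]
Writing $\alpha:=m/\bar s\in(0,\tfrac12)$ and noting $\binom{m}{2}/\binom{\bar s}{2}=\alpha^2+O(1/n)$, substituting $\mathrm{disc}(S,\overline S)=-D-\mathrm{disc}(\overline S)$ from the identity and using $|\mathrm{disc}(\overline S)|<D$ to absorb the slack collapses the right side to $(1-\alpha)D+(\alpha^2-\alpha)\mathrm{disc}(\overline S)+O(D/n)$; since $\alpha^2-\alpha<0$ and $\mathrm{disc}(\overline S)<D$, this exceeds $(1-\alpha)^2D-o(D)\ge\tfrac14 D-o(D)$, the last step using $\alpha<\tfrac12$. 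Hence some realization of $S'$ has $|\mathrm{disc}(S')|\ge(\tfrac14-o(1))D$, finishing the proof. This last case is the crux: the crossing count $e(S,\overline S)$ is not pinned down by $q$, and the argument works only because the identity trades it for the (now small) quantity $\mathrm{disc}(\overline S)$ before any estimating is done; the constant $\tfrac14$ is then forced, being the common limit of $\binom{n/2}{2}/\binom{n}{2}$ in the subset step and of $(1-\alpha)^2$ as $\alpha\uparrow\tfrac12$ in the augmentation step.
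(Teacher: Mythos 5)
The paper does not actually prove Lemma~\ref{lem:discrepancy}; it cites it to Erd\H os, Goldberg, Pach, and Spencer~\cite{ErdosGoldbergPach} as a known result, so there is no in-paper proof to compare against. Your proposal is therefore a self-contained proof of a cited lemma, and on careful checking it is correct. The complementation reduction (negating all discrepancies), the identity $\mathrm{disc}(S)+\mathrm{disc}(S,\overline S)+\mathrm{disc}(\overline S)=0$, the exact formula $\mathbb{E}[\mathrm{disc}(S')]=\bigl(\binom{n/2}{2}/\binom{|T|}{2}\bigr)\mathrm{disc}(T)$ for a uniformly random $(n/2)$-subset of $T$, and the augmentation calculation $\mathbb{E}[\mathrm{disc}(S\cup R)]=\mathrm{disc}(S)+\tfrac{m}{\bar s}\mathrm{disc}(S,\overline S)+\tfrac{\binom{m}{2}}{\binom{\bar s}{2}}\mathrm{disc}(\overline S)$ all check out, and the error incurred by replacing $\binom{m}{2}/\binom{\bar s}{2}$ with $\alpha^2$ is uniformly $O(1/n)$ because $\bar s>n/2$. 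The three-way case split cleanly exhausts all possibilities, and in the final case the inequality $(1-\alpha)D+(\alpha^2-\alpha)\mathrm{disc}(\overline S)\ge(1-\alpha)^2D$ follows correctly from $\alpha^2-\alpha<0$ and $\mathrm{disc}(\overline S)<D$, with $(1-\alpha)^2>\tfrac14$ since $\alpha<\tfrac12$. The passage from $D\to\infty$ to $n\to\infty$ (needed to make the $O(D/n)$ terms into $o(D)$) is justified by $D\le\binom{n}{2}$. One cosmetic point: you write ``I may assume $\mathrm{disc}(S)=D>0$'' where the hypothesis only gives $\ge D$, but this is harmless since proving the conclusion for the larger value $|\mathrm{disc}(S)|$ is only stronger (your error terms depend on $n$, not $D$, so monotonicity causes no trouble). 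The proof is a nice, elementary first-moment argument whose crux --- trading the unconstrained crossing count $e(S,\overline S)$ for the controlled $\mathrm{disc}(\overline S)$ via the identity before estimating --- you identify correctly.
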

In fact, we will only need the following corollary.
\begin{corollary}
\label{cor:discrepancy}Let $G$ be a graph with edge density $q$.
If there is a subset $S\subseteq V(G)$ for which $|e(S)-q\binom{|S|}{2}|\ge D$,
then there exist two disjoint subsets $X,Y\subseteq V(G)$ of size
$\frac{1}{4}v(G)$ such that
\[
|e(X)-e(Y)|\ge\frac{1}{16}D-o(v(G)^{2}),
\]
where the error term is a function of $v(G)\rightarrow\infty$.
\end{corollary}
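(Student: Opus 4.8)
The plan is to use Lemma~\ref{lem:discrepancy} to reduce to a vertex set of size exactly $\tfrac12 v(G)$, and then to produce the desired pair $X,Y$ by a single first-moment computation in which $Y$ is chosen to straddle the resulting bipartition evenly, so that the relevant discrepancies cancel. Write $n=v(G)$ throughout, and ignore divisibility issues, which perturb edge counts by only $O(n)=o(n^2)$.

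First I would apply Lemma~\ref{lem:discrepancy} to obtain $S'\subseteq V(G)$ with $|S'|=\tfrac n2$ and $\bigl|e(S')-q\binom{|S'|}{2}\bigr|\ge(\tfrac14-o(1))D=:E$. Let $W=V(G)\setminus S'$, also of size $\tfrac n2$, write $e(S',W)$ for the number of edges of $G$ between $S'$ and $W$, and set
\[
\alpha=e(S')-q\binom{|S'|}{2},\qquad \beta=e(W)-q\binom{|W|}{2},\qquad \gamma=e(S',W)-q\,|S'|\,|W|.
\]
Since $e(S')+e(W)+e(S',W)=e(G)=q\binom{n}{2}$ and $\binom{n}{2}=\binom{|S'|}{2}+\binom{|W|}{2}+|S'||W|$, we have $\alpha+\beta+\gamma=0$; also $|\alpha|\ge E$.

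Now the key construction. Choose a uniformly random partition of $S'$ into parts $X,Y_1,R$ of sizes $\tfrac n4,\tfrac n8,\tfrac n8$, and, independently, a uniformly random subset $Y_2\subseteq W$ of size $\tfrac n8$; set $Y=Y_1\cup Y_2$. Then $X$ and $Y$ are disjoint, each of size $\tfrac n4$. I would compute first moments. Since $X$ is (marginally) a uniform $\tfrac n4$-subset of $S'$, linearity of expectation gives
\[
\mathbb{E}[e(X)]=\frac{\binom{n/4}{2}}{\binom{|S'|}{2}}\,e(S')=q\binom{n/4}{2}+\tfrac14\alpha+o(n^2).
\]
Expanding $e(Y)=e(Y_1)+e(Y_2)+e(Y_1,Y_2)$ and using that $Y_1,Y_2$ are independent uniform $\tfrac n8$-subsets of $S',W$,
\[
\mathbb{E}[e(Y)]=\frac{\binom{n/8}{2}}{\binom{|S'|}{2}}e(S')+\frac{\binom{n/8}{2}}{\binom{|W|}{2}}e(W)+\frac{(n/8)^2}{|S'|\,|W|}e(S',W)=q\binom{n/4}{2}+\frac{\alpha+\beta+\gamma}{16}+o(n^2),
\]
because the ``$q$-parts'' recombine exactly, via $2q\binom{n/8}{2}+q(n/8)^2=q\binom{n/4}{2}$, and each of the three coefficients above is $\tfrac1{16}+o(1)$ — this last point being precisely where $|S'|=|W|=\tfrac n2$ enters. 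By $\alpha+\beta+\gamma=0$ this is $q\binom{n/4}{2}+o(n^2)$. Therefore $\mathbb{E}[e(X)-e(Y)]=\tfrac14\alpha+o(n^2)$, whose absolute value is at least $\tfrac14E-o(n^2)=\tfrac1{16}D-o(n^2)$ (using $|\alpha|\ge E=(\tfrac14-o(1))D$ and $D=O(n^2)$). Since a real-valued random variable attains, at some outcome, a value at least its expectation in absolute value, there exist disjoint $\tfrac14 v(G)$-sets $X,Y$ with $|e(X)-e(Y)|\ge\tfrac1{16}D-o(v(G)^2)$, as required.

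The one genuinely delicate choice — and the point where a naive attempt goes astray — is making $Y$ split evenly between $S'$ and $W$. The obvious first try, pairing the (on average) dense quarter $X\subseteq S'$ with a sufficiently non-dense quarter found inside $W$ (or inside $V(G)\setminus X$), breaks down exactly when $W$ (respectively $V(G)\setminus X$) is itself dense, which can occur precisely when the cut between $S'$ and $W$ is edge-deficient. Placing half of $Y$ on each side of the cut forces the contributions of $\alpha$, $\beta$, and $\gamma$ to $\mathbb{E}[e(Y)]$ to carry the identical weight $\tfrac1{16}$, so that they cancel via $\alpha+\beta+\gamma=0$ regardless of how the discrepancy is distributed among the three pieces; this is what removes any need for case analysis. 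Verifying that these three weights really do agree up to $o(1)$ — the requirement that forces $|S'|=\tfrac12 v(G)$, hence the initial appeal to Lemma~\ref{lem:discrepancy} — is the only computation that needs any care.
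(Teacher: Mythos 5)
Your proof is correct and takes a genuinely different route from the paper's. Both begin by applying Lemma~\ref{lem:discrepancy} to produce $S'$ of size $\tfrac12 v(G)$ with large discrepancy, but the constructions of $X,Y$ then diverge. The paper lets $A\subseteq V(G)$ be a uniformly random subset (each vertex included independently with probability $\tfrac12$), sets $X=A\cap S'$, and draws $Y$ uniformly from the subsets of $V(G)\setminus A$; the sizes $|X|,|Y|$ and the edge counts are then all random, so the paper has to invoke Chernoff bounds for $|X|,|Y|$ and the Azuma--Hoeffding inequality for $e(X),e(Y)$, take a union bound over the four high-probability events, and finally trim $o(v(G))$ vertices to make the sizes exactly $\tfrac14 v(G)$. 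Your construction fixes all set sizes deterministically from the start and closes with a single linearity-of-expectation computation plus the first-moment observation that some outcome achieves $|Z|\ge|\mathbb{E}[Z]|$; this eliminates the concentration machinery entirely. Your identity $\alpha+\beta+\gamma=0$ also makes the cancellation transparent, where the analogous step in the paper is the arithmetic observation that $\tfrac14 e(S')-\tfrac1{16}e(G)=\tfrac14\alpha+O(v(G))$. One small remark on your closing commentary: conceptually the paper's $Y$ is the same object as yours (a roughly uniform quarter of $V(G)$ disjoint from $X$, hence split about evenly across $S'$ and $W$), so the paper does not fall into the ``dense $W$'' trap either — it sidesteps it the same way you do, just with concentration rather than exact sizes. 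Your version is cleaner and more elementary.
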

\begin{proof}
Apply Lemma~\ref{lem:discrepancy} to find a set $S'$ with the stated
properties. Now, pick a uniformly random subset $A\subseteq V(G)$,
obtained by independently picking each vertex of $G$ with probability
$1/2$. Let $X=A\cap S'$, and pick a uniformly random subset $Y\subseteq V(G)\backslash A$.
We now prove that the sizes of $|X|,|Y|,e(X),$ and $e(Y)$ are tightly
concentrated about their expected values.

The marginal distribution of $X$ is the uniform distribution on subsets
of $S'$. Thus, we expect $|X|$ to be tightly concentrated about
$\frac{1}{2}|S'|=\frac{1}{4}v(G)$ and $e(X)$ to be tightly concentrated
about $\frac{1}{4}e(S')$. Meanwhile, the marginal distribution of
$Y$ is the distribution obtained by independently adding each $v\in V(G)$
to $Y$ with probability $\frac{1}{4}$, so we expect $|Y|$ to be
tightly concentrated about $\frac{1}{4}v(G)$ and $e(Y)$ to be tightly
concentrated about $\frac{1}{16}e(G)$.

We first check the tight concentrations of orders. Note that $|X|\sim B(\frac{1}{2}v(G),\frac{1}{2})$
and $|Y|\sim B(v(G),\frac{1}{4})$ are both binomial distributions
with the same mean. By a standard application of Chernoff bounds,
we have that for any $\delta\in[0,1]$,
\[
\Pr\Big[\Big||X|-\frac{1}{4}v(G)\Big|\le\frac{1}{4}\delta v(G)\Big]\le e^{-\delta^{2}v(G)/12},
\]
and the same bound holds for $|Y|$. In particular, w.h.p. $|X|=|Y|=(\frac{1}{4}+o(1))v(G)$.

Next, let $A_{i}$ be the event that the $i$-th vertex of $S'$ lies
in $A$, and define
\[
E_{i}=\mathbb{E}[e(X)|A_{1},\ldots,A_{i}]
\]
to be the expected number of edges in $S'$, conditioned on the first
$i$ events of the $A_{i}$. Notice that $E_{|S'|}=e(X)$ and $E_{0},\ldots,E_{|S'|}$
is a martingale satisfying the $v(G)$-Lipschitz condition, since
adding or removing a single vertex from $X$ changes the number of
edges by at most $v(G)$. The expected value of $e(X)$ is $\frac{1}{4}e(S')$
by linearity of expectation, so the Azuma-Hoeffding inequality gives
\[
\Pr[|e(X)-\frac{1}{4}e(S')|\ge t]\le2\exp\Big(-\frac{t^{2}}{v(G)^{3}}\Big).
\]

Thus, w.h.p. $e(X)=\frac{1}{4}e(S')+o(v(G)^{2})$. By the same argument,
w.h.p. $e(Y)=\frac{1}{16}e(G)+o(v(G)^{2})$.

Since all four concentration events happen with high probability,
there exists some two disjoint sets $X\subseteq S'$ and $Y\subseteq V(G)$
for which $|X|=|Y|=(\frac{1}{4}+o(1))v(G)$, $e(X)=\frac{1}{4}e(S')+o(v(G)^{2})$,
and $e(Y)=\frac{1}{16}e(G)+o(v(G)^{2})$. Add or remove $o(v(G))$
vertices (and therefore $o(v(G)^{2})$ edges) to $X$ and $Y$, we
obtain two sets of the desired order $\frac{1}{4}v(G)$. Their edge
counts still satisfy
\[
|e(X)-e(Y)|=|\frac{1}{4}e(S')-\frac{1}{16}e(G)|+o(v(G)^{2})=\frac{1}{16}D+o(D)+o(v(G)^{2})
\]
as desired.
\end{proof}

\section{The main lemma}

With the stronger quasirandomness condition $\mathcal{R}'_{H,p}(\delta)$,
we are ready to prove the main lemma, which is motivated by an argument
of Conlon, Fox, Sudakov~\cite{ConlonFoxSudakov}. Fix a vertex $v_{0}\in H$
with degree $r$. We will want to control the average difference of
$|d^{r}(u)-d^{r}(v)|$ over all pairs $u,v\in G$, where $d^{r}(u)$
is the $r$-th power of the degree of $u$.
\begin{definition}
Let $H'=H\backslash\{v_{1}\}$. Define $c(u,v)$ to be the number
of pairs of homomorphisms $\phi,\psi:H\rightarrow G$ such that $\phi(v_{1})=u$,
$\psi(v_{1})=v$, and $\phi|_{H'}=\psi|_{H'}$, i.e. the number of
copies of $H'$ in $G$ that extend to a copy of $H$ when we add
either $u$ or $v$ for the first vertex.
\end{definition}
In the language of Reiher and Schacht~\cite{ReiherSchacht}, $c(u,v)$
counts copies of the graph $K$ obtained from $H$ by doubling the
first vertex and fixing the images of these two doubles to be $u$
and $v$.
\begin{lemma}
\label{lem:main}Let $H$ be a graph with a vertex $v_{0}$ of degree
$r$, and let $G$ be a graph that satisfies $\mathcal{R}_{H,p}(\delta)$.
Then,
\[
\sum_{u,v\in V(G)}|d^{r}(u)-d^{r}(v)|=O(p^{e(H)}\delta v(G)^{r+2}).
\]
\end{lemma}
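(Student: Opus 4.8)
The plan is to show that the function $u \mapsto d^{r}(u)$ is concentrated in $\ell^{1}$ about its mean, by double‑counting homomorphisms of the vertex‑doubled graph $K$ appearing in the definition of $c(u,v)$. First, since $\mathcal{R}_{H,p}(\delta)$ only constrains counts over \emph{disjoint} tuples of subsets whereas the argument needs arbitrary (overlapping) ones, I would use Lemma~\ref{lem:disjoint} to pass to $\mathcal{R}'_{H,p}(\delta')$ with $\delta' = O_{H}(\delta)$. I would also record the preliminary lower bound on $e(G)$ obtained by feeding $c(H,G) = p^{e(H)}v(G)^{v(H)} \pm \delta' v(G)^{v(H)}$ (a special case of $\mathcal{R}'_{H,p}(\delta')$) into Lemma~\ref{lem:maxcopies}; this is what lets the final error carry a power of $p$.

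Write $n = v(H)$, $N = v(G)$, $e = e(H)$, let $v_{j_{1}},\dots,v_{j_{r}}$ be the neighbours of $v_{0}$ in $H$, and let $K$ be $H$ with $v_{0}$ doubled into two non‑adjacent vertices $v_{0},v_{0}'$ each joined to $v_{j_{1}},\dots,v_{j_{r}}$. The heart of the argument is to evaluate, for an arbitrary $X \subseteq V(G)$, the count $c(K,G;X,V,\dots,V)$ of homomorphisms $K\to G$ with $v_{0}\mapsto X$ and every other vertex unconstrained, in two ways. Enumerating over the image $x$ of $v_{0}'$: such a homomorphism restricts to a homomorphism of $H$ with $v_{0}\mapsto X$ all of whose $v_{j_{t}}$ lie in $N(x)$, so the count equals $\sum_{x\in V(G)} c\big(H,G;\,v_{0}\!\mapsto\! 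X,\ v_{j_{t}}\!\mapsto\! N(x),\ \mathrm{rest}\!\mapsto\! V\big)$, and $\mathcal{R}'_{H,p}(\delta')$ applied termwise gives $p^{e}|X|N^{n-1-r}\sum_{x}d^{r}(x)\pm N\delta' N^{n}$. Enumerating instead over the image $u\in X$ of $v_{0}$: by the symmetry $v_0\leftrightarrow v_0'$ the count equals $\sum_{u\in X} c\big(H,G;\,v_{0}'\!\mapsto\! V,\ v_{j_{t}}\!\mapsto\! N(u),\ \mathrm{rest}\!\mapsto\! V\big)$, which by $\mathcal{R}'_{H,p}(\delta')$ is $p^{e}N^{n-r}\sum_{u\in X}d^{r}(u)\pm |X|\delta' N^{n}$. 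Equating the two and dividing by $p^{e}N^{n-r}$ yields, for every $X\subseteq V(G)$,
\[
\sum_{u\in X} d^{r}(u) \;=\; \frac{|X|}{N}\sum_{v\in V(G)} d^{r}(v)\;\pm\;O_{H}\!\Big(\frac{\delta'}{p^{e}}N^{\,r+1}\Big).
\]

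Granting this, the conclusion is immediate: setting $\mu := N^{-1}\sum_{v}d^{r}(v)$ and taking $X = \{u : d^{r}(u)\ge \mu\}$ and then $X$ equal to its complement pins $\sum_{u}|d^{r}(u)-\mu|$ down to $O_{H}(\delta' p^{-e}N^{r+1})$, whence $\sum_{u,v}|d^{r}(u)-d^{r}(v)| \le 2N\sum_{u}|d^{r}(u)-\mu|$ is bounded as asserted.

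The main obstacle is the error accounting. The first double‑counting applies $\mathcal{R}'_{H,p}(\delta')$ once for each of the $N$ vertices $x$, so a naive triangle inequality only controls the accumulated error by $\delta' N^{n+1}$, and dividing by $p^{e}N^{n-r}$ then loses a factor $p^{-e}$; to recover the exact power of $p$ stated in the lemma one must do a sharper, cancellation‑aware estimate — writing each common‑neighbourhood size $|\bigcap_{t}N(x_{t})|$ as $p^{r}N$ plus a fluctuation, cancelling the main terms before dividing, and invoking the density bound from Lemma~\ref{lem:maxcopies} on the residual. A few degenerate situations then need routine separate treatment: isolated vertices of $H$ (pass to the non‑isolated part and pad by $N$'s), the range of $\delta$ for which the bound is already trivial, and very small $X$, where the per‑$X$ error is not yet negligible.
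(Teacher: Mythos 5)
Your proof is correct as far as it goes, and the bound it delivers, $O(p^{-e(H)}\delta v(G)^{r+2})$, is exactly the bound the paper actually proves and uses: the $p^{e(H)}$ in the lemma statement is a sign typo. The final display of the paper's own proof reads $O(p^{-e(H)}\delta v(G)^{r+2})$, and the invocation of this lemma inside the proof of Lemma~\ref{lem:finish} likewise carries $p^{-e(H)}$. Consequently the ``main obstacle'' you flag at the end is a phantom: a positive power of $p$ is not attainable by this counting argument, is not claimed, and is not needed downstream. You should simply delete the closing paragraph; the sketched cancellation-aware refinement is both unworked and unnecessary, and Lemma~\ref{lem:maxcopies} together with the density lower bound it provides plays no role in this lemma (it is used only later, in Lemma~\ref{lem:finish}).

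Methodologically your argument is a light repackaging of the paper's. The paper fixes a vertex $u$, writes
\[
\sum_{v\in V^{\pm}}c(u,v)=c\Big(H,G;\,V^{\pm},\,\overbrace{N(u),\dots,N(u)}^{r},\,V,\dots,V\Big)
\]
where $V^{\pm}$ is the sign-split of $v\mapsto c(u,v)-p^{e(H)}d^{r}(u)v(G)^{v(H)-r-1}$, bounds the resulting $\Sigma_{u}^{\pm}$ via the quasirandomness hypothesis, and sums the pointwise triangle inequality over all pairs $(u,v)$. You instead double-count homomorphisms of the doubled graph $K$ with the original special vertex restricted to an arbitrary $X$, deduce that $d^{r}$ is concentrated in $\ell^{1}$ about its mean $\mu$ by taking $X=\{d^{r}\ge\mu\}$ and its complement, and conclude by the two-term triangle inequality against $\mu$. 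Both routes pass through $\mathcal{R}'_{H,p}$ (via Lemma~\ref{lem:disjoint}; note the paper applies ``$\mathcal{R}_{H,p}(\delta)$'' to overlapping sets, which tacitly uses the same reduction at the cost of an $O_{H}(1)$ factor), apply that hypothesis termwise $\Theta(v(G))$ times, and then divide out $p^{e(H)}v(G)^{v(H)-r-1}$ from the main term, so they give identical bounds with identical effort. The degenerate situations you list at the end (isolated vertices, trivial $\delta$, small $X$) are indeed nonissues: the estimate you derive for a given $X$ is an absolute one, valid regardless of $|X|$, and when the chosen vertex of $H$ has degree $0$ the statement is vacuous since $d^{0}\equiv 1$.
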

\begin{proof}

First we give a heuristic for the average value of $c(u,v)$. Counting
pairs of homomorphisms $\phi,\psi$ with the given property is the
same as counting the number of homomorphisms $\psi:H\rightarrow G$
for which $\psi(v_{i})\in N(u)$ whenever $(v_{1},v_{i})\in E(H)$.
If the $r$ such vertices are $v_{2},\ldots,v_{r+1}$ (without loss
of generality), then 
\[
c(u,v)=c(H,G;\{v_{1}\},\overbrace{N(u),\ldots,N(u)}^{r},\overbrace{V(G),\ldots,V(G)}^{v(H)-r-1}).
\]
By this formula, we expect that on average,
\[
c(u,v)\approx p^{e(H)}d^{r}(u)v(G)^{v(H)-r-1}.
\]

Turning this approximation on its head, we expect $d^{r}(u)\approx p^{-e(H)}v(G)^{-v(H)+r+1}c(u,v)$,
and so it is natural to bound
\[
|d^{r}(u)-d^{r}(v)|\le|d^{r}(u)-p^{-e(H)}v(G)^{-v(H)+r+1}c(u,v)|+|d^{r}(v)-p^{-e(H)}v(G)^{-v(H)+r+1}c(u,v)|
\]
via the triangle inequality. Summing over all pairs $(u,v)$, we obtain

\begin{eqnarray}
\sum_{u,v\in V(G)}|d^{r}(u)-d^{r}(v)| \le 2\sum_{u,v\in V(G)}|d^{r}(u)-p^{-e(H)}v(G)^{-v(H)+r+1}c(u,v)|\nonumber \\
 = 2p^{-e(H)}v(G)^{-v(H)+r+1}\sum_{u,v\in V(G)}|p^{e(H)}d^{r}(u)v(G)^{v(H)-r-1}-c(u,v)|.\label{eq:d-difference}
\end{eqnarray}

For a fixed vertex $u$, the relevant sum over $v$ is
\[
\Sigma_{u}=\sum_{v\in V(G)}|c(u,v)-p^{e(H)}d^{r}(u)v(G)^{v(H)-r-1}|.
\]

Break up $V(G)=V^{+}\cup V^{-}$ so that $c(u,v)-p^{-e(H)}d^{r}(u)v(G)^{v(H)-r-1}$
is nonnegative on $v\in V^{+}$ and negative on $v\in V^{-}$, and
let $\Sigma_{u}^{+},\Sigma_{u}^{-}$ be the pieces of the sum $\Sigma_{u}$
supported on $V^{+}$, $V^{-}$ respectively. For the positive side,
we get
\begin{eqnarray*}
\Sigma_{u}^{+} & = & \sum_{v\in V^{+}}\Big(c(u,v)-p^{e(H)}d^{r}(u)v(G)^{v(H)-r-1}\Big)\\
 & = & \sum_{v\in V^{+}}c(u,v)-p^{e(H)}|V^{+}|d^{r}(u)v(G)^{v(H)-r-1}.
\end{eqnarray*}

On the other hand, for a fixed $u$, the sum of $c(u,v)$ over $v\in V^{+}$
is exactly
\[
\sum_{v\in V^{+}}c(u,v)=c(H,G;V^{+},\overbrace{N(u),\ldots,N(u)}^{r},\overbrace{V(G),\ldots,V(G)}^{v(H)-r-1}),
\]
a quantity which we can estimate using the given quasirandomness condition. By $\mathcal{R}_{H,p}(\delta)$ it follows that
\[
|\Sigma_{u}^{+}|\le\delta v(G)^{v(H)},
\]
and the same argument shows that
\[
|\Sigma_{u}^{-}|\le\delta v(G)^{v(H)}.
\]

Returning to \eqref{eq:d-difference}, we get the desired bound by the triangle inequality:
\begin{eqnarray*}
\sum_{u,v}|d^{r}(u)-d^{r}(v)| & \le & 2p^{-e(H)}v(G)^{-v(H)+r+1}\sum_{u}(|\Sigma_{u}^{+}|+|\Sigma_{u}^{-}|)\\
 & = & O(p^{-e(H)}\delta v(G)^{r+2}).
\end{eqnarray*}
\end{proof}

\section{Proof of the main theorem}

Conlon, Fox, and Sudakov~\cite{ConlonFoxSudakov} prove the following
elementary inequality to fully exploit the degree bound obtained in Lemma~\ref{lem:main}.
\begin{lemma}
\label{lem:inequality}(Corollary 2.2 from~\cite{ConlonFoxSudakov}.)
Let $a_{1},\ldots,a_{n}$ and $b_{1},\ldots,b_{n}$ be two sets of
$n$ non-negative integers. Then, for any $r\in\mathbb{N}$,
\[
\sum_{i,j=1}^{n}|b_{j}^{r}-a_{i}^{r}|\ge\sum_{j=1}^{n}b_{j}^{r-1}\cdot\Big(\sum_{j=1}^{n}b_{j}-\sum_{i=1}^{n}a_{i}\Big).
\]
\end{lemma}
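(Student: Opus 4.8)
The plan is to derive the inequality from a single pointwise estimate together with Chebyshev's sum inequality. The pointwise estimate I would establish first is that for all non-negative reals $x,y$ and every integer $r\ge1$,
\[
|y^{r}-x^{r}|\ge y^{r-1}(y-x).
\]
When $y<x$ the right-hand side is non-positive, so there is nothing to prove; when $0\le x\le y$ one has $y^{r-1}\ge x^{r-1}$, and the identity $y^{r}-x^{r}=y^{r-1}(y-x)+x\bigl(y^{r-1}-x^{r-1}\bigr)$ exhibits the difference as $y^{r-1}(y-x)$ plus a non-negative term. The point of this shape is that it is the power of the larger (``$b$''-type) variable that survives, matching the factor $\sum_{j}b_{j}^{r-1}$ in the statement.

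Next I would apply this with $y=b_{j}$ and $x=a_{i}$ and sum over all $i,j\in[n]$. Summing the right-hand side gives
\[
\sum_{i,j=1}^{n}b_{j}^{r-1}(b_{j}-a_{i})=n\sum_{j=1}^{n}b_{j}^{r}-\Bigl(\sum_{i=1}^{n}a_{i}\Bigr)\Bigl(\sum_{j=1}^{n}b_{j}^{r-1}\Bigr),
\]
which already has the desired second term but carries $n\sum_{j}b_{j}^{r}$ where the statement wants $\bigl(\sum_{j}b_{j}^{r-1}\bigr)\bigl(\sum_{j}b_{j}\bigr)$.

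Finally I would close this gap with Chebyshev's sum inequality: since $t\mapsto t^{r-1}$ is monotone on $[0,\infty)$, the sequences $(b_{j}^{r-1})_{j}$ and $(b_{j})_{j}$ are similarly ordered, so $n\sum_{j}b_{j}^{r}=n\sum_{j}b_{j}^{r-1}\cdot b_{j}\ge\bigl(\sum_{j}b_{j}^{r-1}\bigr)\bigl(\sum_{j}b_{j}\bigr)$; no reordering is needed because the ordering condition holds entrywise. Substituting this into the previous display and factoring out $\sum_{j}b_{j}^{r-1}$ yields the claimed bound. I do not expect a genuine obstacle here: the only things to be careful about are orienting the pointwise bound correctly (so that $b_{j}^{r-1}$, not $a_{i}^{r-1}$, appears) and recognizing that the leftover discrepancy $n\sum_{j}b_{j}^{r}-\bigl(\sum_{j}b_{j}^{r-1}\bigr)\bigl(\sum_{j}b_{j}\bigr)$ is exactly what Chebyshev controls. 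Integrality is never used, and the argument goes through verbatim for non-negative reals (reading $0^{0}=1$ in the case $r=1$).
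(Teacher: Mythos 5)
The paper does not prove this lemma; it is stated with the attribution ``Corollary~2.2 from~\cite{ConlonFoxSudakov}'' and used as a black box, so there is no in-paper argument to compare against. Your proof is correct and self-contained. The pointwise bound $|y^{r}-x^{r}|\ge y^{r-1}(y-x)$ is established soundly in both sign cases (trivially when $y<x$; via the nonnegative remainder $x(y^{r-1}-x^{r-1})$ when $x\le y$), the double-sum identity
\[
\sum_{i,j}b_{j}^{r-1}(b_{j}-a_{i})=n\sum_{j}b_{j}^{r}-\Bigl(\sum_{j}b_{j}^{r-1}\Bigr)\Bigl(\sum_{i}a_{i}\Bigr)
\]
is exact, and closing the gap $n\sum_{j}b_{j}^{r}\ge\bigl(\sum_{j}b_{j}^{r-1}\bigr)\bigl(\sum_{j}b_{j}\bigr)$ is indeed precisely Chebyshev's sum inequality applied to the comonotone sequences $(b_{j}^{r-1})_{j}$ and $(b_{j})_{j}$. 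Your observation that integrality is never used and that the statement holds for nonnegative reals is also right. One small remark: your pointwise bound can be sharpened for free to $|y^{r}-x^{r}|\ge|y-x|\,y^{r-1}$, since when $x>y$ one has $x^{r}-y^{r}=(x-y)(x^{r-1}+\cdots+y^{r-1})\ge(x-y)y^{r-1}$; this doesn't change the rest of the argument, but it makes the first step symmetric and perhaps slightly more transparent.
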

Now we finish the proof of Theorem~\ref{thm:main} in the following
form.
\begin{lemma}
\label{lem:finish}If $H$ is a nonempty graph and $G$ is a graph
satisfying $\mathcal{R}'_{H,p}(\delta)$, then $G$ satisfies $\mathcal{P}_{K_{2},p}^{*}(\varepsilon)$
for some $\varepsilon=O_{H}(p^{-2e(H)}\delta)$.
\end{lemma}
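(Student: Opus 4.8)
The plan is to first obtain a preliminary lower bound on the edge density of $G$, then apply Lemma~\ref{lem:main} together with the elementary inequality Lemma~\ref{lem:inequality} to pin down the degree sequence of $G$ as nearly regular, and finally leverage near-regularity plus the counting Lemma~\ref{lem:counting} (or a direct discrepancy argument via Corollary~\ref{cor:discrepancy}) to conclude quasirandomness. Throughout, write $n=v(G)$, let $v_0\in H$ be a vertex of degree $r\ge 1$ (which exists since $H$ is nonempty, after possibly passing to a component), and set $m=v(H)$, $e=e(H)$.

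First I would establish that $G$ has edge density $q=p+O_H(p^{-e}\delta)$ (if $\delta$ is not small relative to $p$ the conclusion is vacuous, so we may assume $\delta\ll p$). Applying $\mathcal{R}'_{H,p}(\delta)$ with all $U_i=V(G)$ gives $c(H,G)=p^{e}n^{m}\pm\delta n^{m}$; on the other hand, Lemma~\ref{lem:maxcopies} applied to the non-isolated part of $H$ shows $c(H,G)=O(e(G)^{?})$-type bounds force $e(G)$ to be not too small, i.e. $e(G)=\Omega(p^{e/?}n^2)$, and a matching upper bound (each homomorphism count is at most $(2e(G))^{?}n^{?}$ when we route through the degrees) yields the two-sided estimate $q=p\pm O_H(p^{-e}\delta)$ — this is where I would be slightly careful about exponents, but only polynomially-in-$p$ factors are at stake. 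The cleaner route is: $\sum_u d(u)^{r} n^{m-r-1} = p^{-e}c(H,G;V,N(u),\dots) + (\text{error})$ summed appropriately, but simplest is to use $\mathcal{R}'$ directly with a single $U_1$ and the rest $V(G)$ to read off $\sum_{u\in V(G)} d(u)^{r} \approx p^{e} n^{m-1}/n^{m-r-1}$... the bookkeeping gives $\sum_u d^r(u) = (p^r\pm O_H(p^{-e}\delta))n^{r+1}$ after dividing through, which already controls the $r$-th power sum of degrees.

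Next, the heart: by Lemma~\ref{lem:main}, $\sum_{u,v}|d^r(u)-d^r(v)| = O(p^{-e}\delta n^{r+2})$. I would feed this into Lemma~\ref{lem:inequality} with $a_i=b_i=d(v_i)$ in one direction and, more usefully, compare the true degree sequence against the \emph{constant} sequence $a_i\equiv qn$ (equivalently apply the inequality to $\{d(v_i)\}$ against itself to bound $\bigl(\sum d^{r-1}(v)\bigr)\cdot\bigl(\text{deviation of }\sum d(v)\bigr)$ — but since $\sum d(v)=2e(G)=qn^2$ is \emph{exactly} the mean, I instead split the vertex set by whether $d(u)\gtrless qn$ and apply Lemma~\ref{lem:inequality} to each half against a constant). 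This gives $\bigl(\sum_u d^{r-1}(u)\bigr)\cdot\sum_{u:\,d(u)>qn}(d(u)-qn) = O(p^{-e}\delta n^{r+2})$. Since $\sum_u d^{r-1}(u)=\Theta(q^{r-1}n^{r})=\Theta(p^{r-1}n^r)$ (using the power-sum control from step one and convexity/Jensen in the lower direction, Cauchy–Schwarz in the upper), I conclude $\sum_u |d(u)-qn| = O(p^{-e}/p^{r-1}\cdot\delta n^{2}) = O_H(p^{-e}\delta n^2)$ — i.e. $G$ is nearly regular in $\ell^1$, with total degree-deviation $O_H(p^{-e}\delta n^2)$.

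Finally I would convert near-regularity into $\mathcal{P}_{K_2,p}^*(\varepsilon)$. Here the clean tool is the classical fact (Chung–Graham–Wilson, or via Corollary~\ref{cor:discrepancy}) that a nearly regular graph with the right number of a fixed subgraph $H$ is quasirandom: apply $\mathcal{R}'_{H,p}(\delta)$ once more to count, say, copies of $H$ using a path or the structure of $H$, combined with the $\ell^1$ degree bound, to show every $S\subseteq V(G)$ has $e(S)=\tfrac12 p|S|^2\pm\varepsilon n^2$. Concretely, suppose some $S$ violates this by $D\ge\varepsilon n^2$; by Corollary~\ref{cor:discrepancy} there are disjoint $X,Y$ of size $n/4$ with $|e(X)-e(Y)|\ge D/16 - o(n^2)$. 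But near-regularity forces $e(X)\approx e(Y)$ up to $O_H(p^{-e}\delta n^2)$: indeed $2e(X)=\sum_{u\in X}d_X(u)$ and edge counts into a size-$n/4$ set differ from their "expected" value $q|X|^2/?$ by an amount controlled by $\mathcal{R}'_{H,p}$ applied with $H=K_2$... wait — $\mathcal{R}'_{K_2,p}$ is not assumed. So instead I use the degree bound plus one more application of $\mathcal{R}'_{H,p}(\delta)$ with $U_1,U_2$ ranging over $X$ and $Y$ to force $e(X),e(Y)$ each close to $\tfrac12 p|X|^2$, contradicting the discrepancy lower bound once $\varepsilon = C_H p^{-2e}\delta$ for a suitable constant $C_H$. \textbf{The hard part} will be managing the powers of $p$: Lemma~\ref{lem:main} loses $p^{-e}$, dividing by $\sum d^{r-1}\sim p^{r-1}n^r$ versus the needed $p^{r}n^r$ normalization loses another factor $p^{-1}$, and the final conversion of the $H$-count into an edge-count costs up to another $p^{-e}$; tracking these to land at exactly $p^{-2e(H)}$ rather than a worse exponent is the delicate accounting the proof must get right.
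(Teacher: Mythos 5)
Your proposal assembles the same toolkit as the paper (the crude density lower bound from Lemma~\ref{lem:maxcopies}, the degree-power control from Lemma~\ref{lem:main}, the elementary inequality Lemma~\ref{lem:inequality}, and the discrepancy Corollary~\ref{cor:discrepancy}), but the architecture is different and there is a genuine gap at the point you yourself flag. You aim to establish global near-regularity, $\sum_u |d(u)-qn| = O_H(p^{-e(H)}\delta n^2)$, and then to ``convert near-regularity into $\mathcal{P}^*_{K_2,p}$.'' That conversion is exactly where the argument is incomplete: near-regularity alone does not imply quasirandomness (e.g.\ $K_{n/2,n/2}$ is regular but far from $(\tfrac12,\varepsilon)$-quasirandom), so you must re-invoke $\mathcal{R}'_{H,p}$ --- but applying $\mathcal{R}'_{H,p}(\delta)$ with $U_1,U_2\in\{X,Y\}$ gives you counts of \emph{$H$-homomorphisms} with two marked vertices in $X$ or $Y$, not counts of \emph{edges} of $X$ or $Y$. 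Turning $H$-counts into edge counts is precisely the content of the lemma you are trying to prove, so this step is circular as stated. You also front-load a two-sided estimate $q = p \pm O_H(p^{-e(H)}\delta)$ whose derivation you leave vague (``the bookkeeping gives\ldots''); the attempt to read off $\sum_u d^r(u)$ by applying $\mathcal{R}'$ to individual singletons $U_1=\{u\}$ fails because the additive error $\delta n^{v(H)}$ swamps the main term $p^{e(H)} n^{v(H)-1}$ for each fixed $u$.

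The paper avoids both issues by a sharper use of the hereditary nature of $\mathcal{R}'_{H,p}$. After obtaining $X,Y$ of size $n/4$ with large $|e(X)-e(Y)|$ from Corollary~\ref{cor:discrepancy}, it applies Lemma~\ref{lem:main} not to $G$ but to the \emph{induced subgraph} $G' = G[X\cup Y]$. Then Lemma~\ref{lem:inequality}, applied with $\{a_i\}$ the $G'$-degrees over $X$ and $\{b_j\}$ the $G'$-degrees over $Y$, produces $\sum_{u\in X} d_{G'}(u) - \sum_{v\in Y} d_{G'}(v) = 2(e(X)-e(Y))$ directly as the ``deviation'' factor, so the discrepancy $|e(X)-e(Y)|$ is bounded above by $O(p^{-e(H)}\delta n^2)/(q^{r-1}n^{r-1})$ without ever passing through global near-regularity. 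This establishes $\mathcal{P}^*_{K_2,q}(\gamma)$ with $\gamma = O(q^{-r+1}p^{-e(H)}\delta)$; only \emph{then} does the paper compare $q$ to $p$, by running Lemma~\ref{lem:counting} to get $\mathcal{R}'_{H,q}$ and reading $|p^{e(H)} - q^{e(H)}|$ off the two estimates of $c(H,G)$. You should restructure your argument around the localization to $G[X\cup Y]$; your global near-regularity intermediate is both harder to establish and insufficient on its own.
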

\begin{proof}
It is easy to check that removing isolated vertices from $H$ has no effect on the quasirandomness condition $\mathcal{R}_{H,p}^{'}(4e(H)\delta)$. We thus assume that $H$ has no isolated vertices.

We may assume that $\delta=o(p^{2e(H)})$ is small compared to $p^{e(H)}$, or else the desired result would be immediately
true with $\varepsilon=1$. Also, because $G$ satisfies $\mathcal{R}'_{H,p}(\delta)$,
\[
c(H,G)\ge p^{e(H)}v(G)^{v(H)}-\delta v(G)^{v(H)}.
\]
Thus, by Lemma~\ref{lem:maxcopies},
\[
p^{e(H)}v(G)^{v(H)}= O(c(H,G)) = O(e(G)^{v(H)}),
\]
which gives a lower bound
\begin{equation}
q=\Omega(p^{e(H)/v(H)})\label{eq:crude}
\end{equation}
on the edge density $q$ of $G$. Here and henceforth all implicit
constants are allowed to depend only on $H$.

Let $r$ be the minimum degree of $H$. We show that $G$ satisfies
$\mathcal{P}_{K_{2},q}^{*}(\gamma)$ where $\gamma=O(q^{-r+1}p^{-e(H)}\delta)$, 
and then that $|p-q|$ is small. Suppose $G$ does not satisfy
$\mathcal{P}_{K_{2},q}^{*}(\gamma)$ for some $\gamma=Cq^{-r+1}p^{-e(H)}\delta$.
It must therefore contain a subset $S$ such that 
\[
|c(K_{2},G[S])-q|S|^{2}|>\gamma v(G)^{2}.
\]

But $c(K_{2},G[S])=2e(S)$, so it follows that $G$ satisfies the
conditions of Corollary~\ref{cor:discrepancy} with $\delta=\frac{1}{2}\gamma-o(1)$.
Applying Corollary~\ref{cor:discrepancy} with this $\delta$, we
may pick $X,Y\subseteq V(G)$ of size $v(G)/4$ for which
\begin{equation}
|e(X)-e(Y)|\ge\Big(\frac{1}{32}\gamma-o(1)\Big)v(G)^{2}.\label{eq:edge-discrep}
\end{equation}

Without loss of generality, assume $e(X)>e(Y)$. The induced subgraph
$G'=G[X\cup Y]$ satisfies $\mathcal{R}_{H,p}(\delta)$ for the same
$\delta$ as $G$ since this property is hereditary, so applying Lemma
\ref{lem:main} to $G'$ and writing $d_{G'}(u)$ for the degree in
$G'$ of $u$, we find that
\begin{equation}
\sum_{u,v\in V(G')}|d_{G'}^{r}(u)-d_{G'}^{r}(v)|=O(p^{-e(H)}\delta v(G')^{r+2}).\label{eq:upper}
\end{equation}
On the other hand, the differences $d_{G'}^{r}(u)-d_{G'}^{r}(v)$
appear in this sum for every pair $(u,v)\in X\times Y$, and these
are large on average by Lemma~\ref{lem:inequality}:
\begin{eqnarray*}
\sum_{u,v\in V(G')}|d_{G'}^{r}(u)-d_{G'}^{r}(v)| & \ge & \sum_{u\in X}\sum_{v\in Y}|d_{G'}^{r}(u)-d_{G'}^{r}(v)|\\
 & \ge & \sum_{u\in X}d_{G'}^{r-1}(u)\Big(\sum_{u\in X}d_{G'}(u)-\sum_{v\in Y}d_{G'}(v)\Big)\\
 & \ge & |X|\Big(\frac{1}{|X|}\sum_{u\in X}d_{G'}(u)\Big)^{r-1}(e(X)-e(Y)).
\end{eqnarray*}
Now, it is easy to see from the proof of Corollary~\ref{cor:discrepancy}
that we choose $X$ to have more edges than average, so that
\[
\frac{1}{|X|}\sum_{u\in X}d_{G'}(u)\ge\frac{2e(X)}{|X|}\ge(q-o(1))|X|.
\]
Together with \eqref{eq:edge-discrep}, it follows that
\[
\sum_{u,v\in V(G')}|d_{G'}^{r}(u)-d_{G'}^{r}(v)|\ge\Omega(q^{r-1}|X|^{r}\cdot\gamma v(G)^{2})\ge\Omega(q^{r-1}\gamma v(G)^{r+2}),
\]
since $|X|=\frac{1}{4}v(G)$. Together with the upper bound \eqref{eq:upper},
we have shown
\[
q^{r-1}\gamma=O(p^{-e(H)}\delta),
\]
which contradicts our choice of $\gamma=Cq^{-r+1}p^{-e(H)}\delta$
if the constant $C$ is sufficiently large. Therefore, $G$ must satisfy
$\mathcal{P}_{K_{2},q}^{*}(\gamma)$ for some $\gamma=O(q^{-r+1}p^{-e(H)}\delta)$.
But then Lemma~\ref{lem:counting} tells us that $G$ also satisfies
$\mathcal{R}'_{H,q}(4e(H)\gamma)$. 

Since $G$ is to satisfy both
$\mathcal{R}'_{H,p}(\delta)$ and $\mathcal{R}'_{H,q}(4e(H)\gamma)$,
we have simultaneously that 
\[
c(H,G)=p^{e(H)}v(G)^{e(H)}\pm\delta v(G)^{e(H)},
\]
and 
\[
c(H,G)=q^{e(H)}v(G)^{e(H)}\pm4e(H)\gamma v(G)^{e(H)}.
\]

By the
triangle inequality,
\begin{eqnarray*}
|p^{e(H)}-q^{e(H)}| & \le & 4e(H)\gamma+\delta,\\
 & = & O(q^{-r+1}p^{-e(H)}\delta).
\end{eqnarray*}

To compare $q$ to $p$, we have the lower bound (\ref{eq:crude}). Recalling that $r$ is the
degree of a single vertex of $H$, we find that $r\le v(H)$, so $q^{-r+1}=O(p^{-e(H)})$. As a result, 
\[
|p^{e(H)}-q^{e(H)}|\le O(p^{-2e(H)}\delta).
\]

Since $G$ satisfies $\mathcal{P}_{K_{2},q}^{*}(\gamma)$, it must also satisfy $\mathcal{P}_{K_{2},p}^{*}(\varepsilon)$ for 
\[
\varepsilon=\gamma+|p^{e(H)}-q^{e(H)}|=O(p^{-2e(H)}\delta),
\]
as desired.
\end{proof}
Combining Lemma~\ref{lem:disjoint} with Lemma~\ref{lem:finish},
Theorem~\ref{thm:main} is proved.

\section*{Acknowledgements}

The author would like to thank Jacob Fox for calling attention to
this problem and for many helpful discussions, and David Conlon and
Mathias Schacht for stimulating conversations. The referee's comments were
also very useful.

\end{document}